\theoremstyle{plain}
\newtheorem{thm}{Theorem}[section]
\newtheorem{lem}[thm]{Lemma}
\newtheorem{prop}[thm]{Proposition}
\newtheorem{cor}[thm]{Corollary}
\newtheorem{proj}{Project}
\newtheorem{qtn}[proj]{Question}
\theoremstyle{definition}
\newtheorem{rem}[thm]{Remark}
\theoremstyle{remark}
\newcommand{\bbB}{\mathbb{B}}
\newcommand{\bbF}{\mathbb{F}}
\newcommand{\bbP}{\mathbb{P}}
\newcommand{\bbQ}{\mathbb{Q}}
\newcommand{\bbR}{\mathbb{R}}
\newcommand{\bbZ}{\mathbb{Z}}
\newcommand{\calC}{\mathcal{C}}
\newcommand{\calG}{\mathcal{G}}
\newcommand{\calL}{\mathcal{L}}
\newcommand{\calO}{\mathcal{O}}
\newcommand{\calT}{\mathcal{T}}
\newcommand{\fraka}{\mathfrak{a}}
\newcommand{\frakp}{\mathfrak{p}}
\newcommand{\al}{\alpha}
\newcommand{\gam}{\gamma}
\newcommand{\Gam}{\Gamma}
\newcommand{\de}{\delta}
\newcommand{\Del}{\Delta}
\newcommand{\ep}{\epsilon}
\newcommand{\lam}{\lambda}
\newcommand{\Sig}{\Sigma}
\DeclareMathOperator{\Orth}{O}
\DeclareMathOperator{\M}{M}
\DeclareMathOperator{\SL}{SL}
\DeclareMathOperator{\PSL}{PSL}
\DeclareMathOperator{\GL}{GL}
\DeclareMathOperator{\PGL}{PGL}
\DeclareMathOperator{\PO}{PO}
\DeclareMathOperator{\SO}{SO}
\DeclareMathOperator{\PU}{PU}
\DeclareMathOperator{\SU}{SU}
\DeclareMathOperator{\Id}{Id}
\DeclareMathOperator{\Gal}{Gal}
\newcommand{\conj}{\overline}
\newenvironment{pf}{\begin{proof}}{\end{proof}}
\newenvironment{enum}{\begin{enumerate}}{\end{enumerate}}
\let\@@pmod\pmod
\DeclareRobustCommand{\pmod}{\@ifstar\@pmods\@@pmod}
\def\@pmods#1{\mkern4mu({\operator@font mod}\mkern 6mu#1)}
\title{Congruence RFRS towers\\ \normalsize{With an appendix by Mehmet Haluk \c{S}eng\"un}}
\author{
Ian Agol\\ \small{University of California Berkeley}
\and
Matthew Stover\\ \small{Temple University}
}
\date{\today}
\begin{document}

\maketitle

\begin{abstract}
We describe a criterion for a real or complex hyperbolic lattice to admit a RFRS tower that consists entirely of congruence subgroups. We use this to show that certain Bianchi groups $\PSL(\calO_d)$ are virtually fibered on congruence subgroups, and also exhibit the first examples of RFRS K\"ahler groups that are not a subgroup of a product of surface groups and abelian groups.
\end{abstract}

\section{Introduction}\label{sec:Intro}

Let $\Gam$ be a finitely generated group. The first author introduced the notion of $\Gam$ being \emph{virtually RFRS} to prove that certain hyperbolic $3$-manifolds are virtually fibered \cite{AgolFiber}, and eventually this was used to prove that all finite-volume hyperbolic $3$-manifolds virtually fiber \cite{AgolHaken, Wise, GrovesManning}. Finding such a cover effectively remains an open problem.

In this paper, we study finding RFRS towers arising from congruence covers of arithmetic manifolds. For example, we will prove:

\begin{thm}\label{thm:Bianchi}
The Bianchi groups $\PSL_2(\calO_d)$ with $d \not\equiv -1 \pmod 8$ and $d$ square-free contain a RFRS tower consisting entirely of congruence subgroups. In particular, these Bianchi orbifolds virtually fiber on a congruence cover.
\end{thm}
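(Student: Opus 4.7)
The plan is to invoke the general criterion for the existence of a congruence RFRS tower proved earlier in the paper, and then to verify its hypotheses for the Bianchi group $\Gam = \PSL_2(\calO_d)$ with $d \not\equiv -1 \pmod 8$. The criterion should reduce the construction of the tower to two essentially independent pieces of data: (i) a congruence subgroup $\Gam_0 \le \Gam$ of positive rational first Betti number to serve as the seed, and (ii) a compatibility condition guaranteeing that at each stage the rational abelianization of the current congruence subgroup is detected by passing to a further congruence subgroup, so the chain can be iterated without leaving the congruence world.

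The hypothesis $d \not\equiv -1 \pmod 8$ is significant because it is precisely the condition that the rational prime $2$ does not split in $\calO_d$. There is then a unique prime $\frakp \subset \calO_d$ lying over $2$, either inert (if $d \equiv 3 \pmod 8$) or ramified (if $d$ is even or $d \equiv 1 \pmod 4$). I would build the tower from the principal congruence subgroups $\Gam(\frakp^k)$, and the non-split assumption gives a single uniform filtration by $2$-adic congruence subgroups whose successive quotients $\Gam(\frakp^k)/\Gam(\frakp^{k+1})$ are elementary abelian $2$-groups; this is the structural setup the criterion is designed to exploit, whereas a split prime would force one to juggle two independent local filtrations at once.

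The main obstacle, and the reason one cannot simply quote the criterion on the nose, is that $\Gam$ itself has vanishing rational first Betti number for most $d$. One must therefore descend to a sufficiently deep $\Gam(\frakp^k)$ to detect any rational $H_1$ at all, and then verify that the classes produced genuinely come from further congruence quotients. Producing and analyzing such classes is the content of the appendix by \c{S}eng\"un, whose $H_1$ computations for congruence subgroups of these Bianchi groups I would take as the analytic input that seeds the tower. Propagation of the RFRS condition along the $\frakp$-adic tower I would then handle via the inflation--restriction sequence for $\Gam(\frakp^{k+1}) \triangleleft \Gam(\frakp^k)$, combined with the abelian $2$-group structure of the successive quotients to show that rational cohomology classes on one stage lift to congruence-visible classes at the next.

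The virtual fibering statement will then follow immediately: by Agol's virtual fibering criterion, a cusped hyperbolic $3$-orbifold whose fundamental group is RFRS and has positive virtual first Betti number admits a finite cover that fibers over $S^1$. Because the RFRS chain produced above consists entirely of congruence subgroups and the fibered class can be lifted to some stage of the tower, the fibered cover itself may be chosen to be a congruence cover of the Bianchi orbifold.
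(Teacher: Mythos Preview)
Your approach diverges from the paper's and has a genuine gap. The paper does \emph{not} build the tower inside $\PSL_2(\calO_d)$ at all. Instead, the condition $d \not\equiv -1 \pmod 8$ is used as the criterion for the quadratic form $\langle 1,1,1,-d\rangle$ to be isotropic over $\bbQ$, which (via results in \cite{ALR}) gives a congruence embedding of a finite-index subgroup of $\PSL_2(\calO_d)$ into $\Orth(4,1;\bbZ)$. The RFRS tower is then constructed once and for all in $\Orth(4,1;\bbZ)$ (Proposition~\ref{prop:Cox4}): the level~$4$ congruence subgroup $\Gam(4)$ has $\Gam(4)^{ab}\cong\bbZ^{55}$ with no $2$-torsion, and the commensurator action on the Bruhat--Tits tree for $\Orth(q;\bbQ_2)$ feeds directly into Lemma~\ref{lem:OurTower}. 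The Bianchi tower is obtained by pulling this back.

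Your direct plan founders precisely on the hypothesis you would need to seed Lemma~\ref{lem:OurTower}: a congruence subgroup at a prime $\frakp\mid 2$ whose abelianization has no $2$-torsion. \c{S}eng\"un's appendix, which you invoke as input, actually shows the \emph{opposite} for the obvious candidate: for $d=1,2,3,11$ the principal congruence subgroup at the prime above $2$ has $2$-torsion in $H_1$. The appendix is experimental data for five Euclidean fields, not a uniform theorem, and it is raised in \S\ref{sec:Conclusion} as an open question when this $p$-torsion vanishes. So you have neither a uniform seed for all $d\not\equiv -1\pmod 8$ nor a workaround for the observed $2$-torsion. Your inflation--restriction sketch also does not engage with what Lemma~\ref{lem:OurTower} actually demands, namely commensurator elements $g_n$ making each $\Del_i/(\Del_i\cap\Del_n)$ an abelian $p$-group; showing successive quotients $\Gam(\frakp^k)/\Gam(\frakp^{k+1})$ are elementary abelian is not by itself the RFRS condition. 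Note, incidentally, that the direct Bianchi approach you have in mind is exactly what \S\ref{ssec:oneBianchi} carries out for a single prime of norm $2$ in $\bbQ(\sqrt{-7})$, i.e., in the split case $d\equiv -1\pmod 8$, complementary to the range covered by the theorem.
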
 

We achieve this using the fact that these Bianchi groups virtually embed in the group $\Orth(4,1; \bbZ)$. We then apply a very general idea to the congruence subgroup of level $4$ in $\Orth(4,1; \bbZ)$ to show that it is virtually RFRS with tower $\{\Gam_j\}$ for which each $\Gam_j$ contains the congruence subgroup of level $2^{n_j}$ for some $n_j$. This example also allows us to find infinitely many commensurability classes of cocompact arithmetic Kleinian groups that virtually fiber on a congruence cover; see \cite[Lem.\ 4.6]{ALR} for examples. We will also show:

\begin{thm}\label{thm:Kahler}
There is a torsion-free cocompact lattice in $\PU(2,1)$ that is RFRS. Therefore, there is a RFRS K\"ahler group that is not isomorphic to a subgroup of the direct product of surface groups and abelian groups.
\end{thm}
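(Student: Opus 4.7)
The argument has two parts: first, construct a torsion-free cocompact RFRS lattice $\Gamma \subset \PU(2,1)$; second, show that such a $\Gamma$ cannot embed in a product of surface groups and free abelian groups.

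For the first part, the plan is to start with an arithmetic lattice $\Lambda \subset \PU(2,1)$ of the second type (constructed from a Hermitian form of signature $(2,1)$ over a CM field $K/F$ with $F$ totally real) admitting a principal congruence subgroup of positive first Betti number; such lattices are classical (going back to Rogawski and Clozel), and the appendix supplies the concrete input. Passing to a torsion-free principal congruence subgroup $\Gamma$ at a suitable prime ideal $\frakp \subset \calO_K$ and applying the congruence RFRS criterion developed earlier in the paper to the ideals $\frakp^n$ then produces a descending tower of principal congruence subgroups $\Gamma = \Gamma_0 \supseteq \Gamma_1 \supseteq \Gamma_2 \supseteq \cdots$ with each $\Gamma_{j+1}$ normal in $\Gamma_j$, each quotient $\Gamma_j/\Gamma_{j+1}$ abelian, and $\bigcap_j \Gamma_j = \{1\}$, exhibiting $\Gamma$ as RFRS.

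For the second part, since $\Gamma$ is torsion-free cocompact in $\PU(2,1)$, the quotient $M := \Gamma \bs \bbH^2_{\bbC}$ is a closed complex hyperbolic surface and hence a K\"ahler manifold, so $\Gamma$ is a K\"ahler group. To rule out an embedding $\Gamma \hookrightarrow \pi_1(\Sigma_{g_1}) \times \cdots \times \pi_1(\Sigma_{g_k}) \times \bbZ^n$, I would argue by contradiction. After passing to a finite-index subgroup to kill finite images, the Carlson--Toledo theorem implies that each projection to a surface factor with infinite image factors through a holomorphic fibration $M \to C_i$ onto a Riemann surface of genus at least $2$, while the projection to $\bbZ^n$ factors through the finitely generated abelianization of $\Gamma$. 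A direct Hodge-theoretic analysis of the resulting classifying map $M \to \prod_i C_i \times T^n$, combined with the positivity of the K\"ahler class $\omega \in H^{1,1}(M)$ (in particular $\int_M \omega^2 > 0$ and the fact that each pullback $f_i^* \omega_{C_i}$ squares to zero), yields the required contradiction.

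The main obstacle is the first part: unlike the real hyperbolic setting, cocompact arithmetic lattices in $\PU(2,1)$ whose principal congruence subgroups have positive first Betti number at accessible congruence levels are relatively scarce, and verifying the hypotheses of the congruence RFRS criterion requires a careful understanding of how the congruence quotients act on cohomology. Once the lattice is in hand, the K\"ahler obstruction in the second part is a fairly standard application of Carlson--Toledo combined with Hodge-theoretic restrictions on $M$.
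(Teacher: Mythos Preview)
Your Part 1 has several genuine gaps. First, the hypothesis you isolate is the wrong one: positive first Betti number is neither what the paper's criterion (Lemma~\ref{lem:OurTower}) requires nor sufficient for it. The criterion needs $H_1(\Gamma;\bbZ)$ to have \emph{no $p$-torsion}, where $p$ is the residue characteristic of $\frakp$; this is what makes the map $\Gamma \to \Gamma/\Gamma_1$ factor through $\Gamma^{rab}$. Second, the tower you describe is not the one the criterion produces: Lemma~\ref{lem:OurTower} does \emph{not} show that $\{\Gamma(\frakp^n)\}$ is RFRS. Whether the principal congruence tower itself is RFRS is posed as an open question in \S\ref{sec:Conclusion}. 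Instead, the tower is built from intersections $\Gamma_n = \bigcap_{i\le n} g_i \Gamma g_i^{-1}$ of commensurator-conjugates, where the $g_i$ are chosen via the action on the Bruhat--Tits tree so that each successive quotient is an abelian $p$-group. Third, the appendix concerns Bianchi groups, not $\PU(2,1)$; it supplies no input here.

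The paper's actual argument is not a general existence statement but a single explicit example: the Deligne--Mostow lattice $\Gam_{\Sig\mu}$ with $\mu=(2,2,2,2,2)$, realized as $\PU(h,\calO_E)$ for $E=\bbQ(\zeta_5)$. One takes $\Gamma = \Gam_{\Sig\mu}(\frakp_5^2)$, verifies by direct (computer-assisted) calculation that $\Gamma^{ab}\cong\bbZ^{60}$ has no $5$-torsion, and then runs Lemma~\ref{lem:OurTower} using an explicit element $g_0\in\SU(h)$ that swaps adjacent vertices in the Bruhat--Tits tree for $\SU(h,E_5)$, checking concretely that $\Gamma(\frakp_5^4)\le \Gamma\cap g_0\Gamma g_0^{-1}$ so the relevant quotient is an abelian $5$-group. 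Without a specific lattice and a verification of the no-$p$-torsion hypothesis, your Part 1 is only a wish, not a proof. Your Part 2 sketch via Carlson--Toledo is plausible but more elaborate than what is needed; the paper treats that implication as essentially immediate from the fact that $\Gamma$ is a cocompact $\PU(2,1)$-lattice.
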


This addresses a question raised in recent work of Friedl and Vidussi \cite[Qu.\ 3.2]{FV}. Our example is a congruence subgroup of a particular Deligne--Mostow lattice \cite{DeligneMostow}. Note that nonuniform lattices in $\PU(n,1)$ cannot be RFRS, since their cusp groups are two-step nilpotent groups, which themselves are not RFRS. In particular, the methods of this paper cannot apply to nonuniform complex hyperbolic lattices. See Remark \ref{rem:NotNonuniform}.

\medskip

We briefly describe the method of constructing these towers. Suppose that $k$ is a number field and $\calG$ is a $k$-algebraic group such that $\calG(k) \otimes_\bbQ \bbR$ modulo compact factors is isomorphic to $\SO(n,1)$ or $\SU(n,1)$. Let $\calO_k$ be the ring of integers of $k$ and $\frakp$ a prime ideal of $\calO_k$ with residue characteristic $p$. Suppose that $\Gam(\frakp)$ is the congruence subgroup of level $\frakp$ in the arithmetic lattice $\calG(\calO_k)$, and that $\Gam < \Gam(\frakp)$ is a finite index subgroup such that $H^1(\Gam; \bbZ)$ has no $p$-torsion.

Using the fact that $\calG(k)$ is closely related to the commensurator of $\Gam$, we find a sequence $\{g_n\}$ in $\calG(k)$ such that
\[
\bigcap_{n = 0}^\infty g_n \Gam g_n^{-1}
\]
is a RFRS tower for $\Gam$. The key is to find an initial $g_1 \in \calG(k)$ so that
\[
\Gam / (\Gam \cap g_1 \Gam g_1^{-1})
\]
is an elementary abelian $p$-group. One then inductively defines each $g_n$ in a manner most succinctly described using the $\frakp$-adic Bruhat--Tits building for $\calG(k_\frakp)$, where $k_\frakp$ is the local field associated with $\frakp$. Note that the above implicitly assumes that $H_1(\Gam; \bbZ)$ is infinite, hence our results can only apply for lattices in $\SO(n,1)$ and $\SU(n,1)$.

\medskip

We close by briefly recalling the connection between RFRS and various notions of fibering. When $\{\Gam_j\}$ is a RFRS tower with $\Gam_j = \pi_1(M_j)$ the fundamental group of an irreducible $3$-manifold, the first author proved that there is some $j_0$ so that $M_j$ fibers over $S^1$ for all $ j \ge j_0$ \cite[Thm.\ 5.1]{AgolFiber} (the statement of the theorem does not explicitly say that the fibered manifold comes from the RFRS tower, but it is implicit in the proof). It is a famous theorem of Stallings that this is equivalent to $\Gam_j$ admitting a homomorphism onto $\bbZ$ with finitely generated kernel for all $j \ge j_0$. Such a homomorphism is called an \emph{algebraic fibration}, and recent work shows that being virtually RFRS is closely related to being virtually algebraically fibered. For example, Friedl and Vidussi \cite[Thm.\ E]{FV} showed that virtually RFRS K\"ahler groups are either virtually algebraically fibered or virtually surface groups, and Kielak proved that an infinite finitely generated virtually RFRS group is virtually algebraically fibered if and only if its first $l^{(2)}$ betti number is zero \cite[Thm.\ 5.3]{Kielak}. Both proofs provide a group in the RFRS tower that algebraically fibers. See \cite{StoverBetti} and \cite{JNW} for more about algebraic fibrations of lattices in Lie groups and Coxeter groups. We rephrase our results in this language in Corollaries \ref{cor:SOfiber} and \ref{cor:CHfiber}, and see \S\ref{sec:Conclusion} for further discussion.

\medskip

This paper is organized as follows. In \S\ref{sec:Towers} we describe some basic preliminary results on RFRS towers and congruence towers. In \S \ref{sec:Exs} we give three examples that describe our general method for producing congruence towers that are RFRS. These examples suffice to prove the theorems stated above. Finally, in \S\ref{sec:Conclusion} we make closing comments and raise some questions.

\subsection{Acknowledgments} 

Agol was partially supported by the Simons Foundation and
the Simonyi professorship at the Institute of Advanced Study. Stover was supported by Grant Number 523197 from the Simons Foundation/SFARI and Grant Number DMS-1906088 from the National Science Foundation.
We thank Alan Reid and Stephen Tschantz for helpful conversations.

\section{Preliminaries on towers}\label{sec:Towers}

In this section, we discuss two types of towers of finite index subgroups of a group: RFRS towers, and $\frakp$-congruence towers.

\subsection{RFRS towers}\label{ssec:RFRS}

Let $\Gam$ be a finitely generated group with commutator subgroup $\Gam^{(1)} = [\Gam, \Gam]$ and abelianization
\[
\Gam^{ab} = \Gam / \Gam^{(1)} \cong H_1(\Gam; \bbZ).
\]
We then define the \emph{rational abelianization} $\Gam^{rab}$ to be the image of $\Gam^{ab}$ in
\[
\Gam^{ab} \otimes_\bbZ \bbQ \cong H_1(\Gam; \bbZ) \otimes_\bbZ \bbQ \cong H_1(\Gam; \bbQ)
\]
under the natural homomorphism and the \emph{rational commutator subgroup}
\[
\Gam_r^{(1)} = \ker\left(\Gam \to \Gam^{rab}\right).
\]
Clearly $\Gam^{(1)} \le \Gam_r^{(1)}$ is finite index and $\Gam^{rab} \cong H_1(\Gam; \bbZ) / \mathrm{Torsion}$.

Given a group $\Gam$, let $\{\Gam_j\}$ be a cofinal tower of finite index subgroups of $\Gam$ with $\Gam_0 = \Gam$. In other words,
\begin{enum}

\item $\bigcap \Gam_j = \{1\}$;

\item $\Gam_j$ is a finite index subgroup of $\Gam$;

\item $\Gam_{j + 1} \le \Gam_j$ for all $j$.

\end{enum}
We say that $\{\Gam_j\}$ is a \emph{RFRS tower} if, in addition,
\begin{equation}
(\Gam_j)_r^{(1)} \le \Gam_{j + 1}\ \textrm{for all}\ j \ge 0. \tag{$\star$} \label{eq:RFRS}
\end{equation}

\begin{rem}
The original definition of RFRS \cite[Def.\ 2.1]{AgolFiber} also required that $\Gam_j$ be normal in $\Gam$. However, it is also pointed out in \cite{AgolFiber} that if there is a RFRS tower, then there is also a normal RFRS tower by passing to core subgroups (i.e., the largest normal refinement).
\end{rem}

We say that $\Gam$ is \emph{RFRS} if it admits such a tower and that it is \emph{virtually RFRS} if it contains a finite index subgroup that is RFRS. We note that RFRS is short for ``residually finite $\bbQ$-solvable'', and refer to \cite{AgolFiber} for further details and examples.

We briefly recall that if $G$ is a group and $\Gam \le G$ a subgroup, the \emph{commensurator} of $\Gam$ in $G$ is the group consisting of those $g \in G$ such that $\Gam \cap (g \Gam g^{-1})$ has finite index in both $\Gam$ and $g \Gam g^{-1}$. Our key technical lemma is the following:

\begin{lem}\label{lem:OurTower}
Let $G$ be a group and $\Gam \le G$ a finitely generated subgroup such that $\Gam^{ab}$ has no $p$-torsion. Suppose that $\{g_0 = \Id, g_1, g_2, \dots\}$ is a sequence in $G$ such that each $g_i$ is in the commensurator of $\Gam$ in $G$. Define
\begin{align*}
\Del_i &= g_i \Gam g_i^{-1} \\
\Gam_n &= \bigcap_{i = 0}^n \Del_i.
\end{align*}
Finally, suppose:
\begin{enum}

\item The sequence $\{\Gam_n\}$ is a cofinal tower of subgroups.

\item For each $n$, there exists some $0 \le i \le n-1$ such that $\Del_i / (\Del_i \cap \Del_n)$ is an abelian $p$-group.

\end{enum}
Then $\{\Gam_n\}$ is a RFRS sequence for $\Gam$.
\end{lem}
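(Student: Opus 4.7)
The plan is to verify only the RFRS condition $(\star)$, namely $(\Gam_n)_r^{(1)} \le \Gam_{n+1}$ for every $n \ge 0$, since hypothesis (1) directly supplies the cofinal-tower requirements. A preliminary observation is that each $\Gam_n$ is finite index in $\Gam$, and in particular finitely generated with a well-defined finitely generated abelianization, because every $g_i$ commensurates $\Gam$ and a finite intersection of finite-index subgroups is finite index.

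Fix $n$ and take $\gamma \in (\Gam_n)_r^{(1)}$; by the definition of the rational commutator subgroup there is some $k \ge 1$ with $\gamma^k \in [\Gam_n, \Gam_n]$. Since $\Gam_{n+1} = \Gam_n \cap \Del_{n+1}$ and $\gamma \in \Gam_n$ already, it suffices to prove $\gamma \in \Del_{n+1}$. I would then apply hypothesis (2) at index $n+1$ to choose $i \in \{0, \ldots, n\}$ for which $A = \Del_i/(\Del_i \cap \Del_{n+1})$ is an abelian $p$-group, and write $\pi : \Del_i \to A$ for the projection. Two facts are immediate: $\Gam_n \le \Del_i$ by the definition of $\Gam_n$, and conjugation by $g_i$ gives $\Del_i \cong \Gam$ as abstract groups, so $\Del_i^{ab} \cong \Gam^{ab}$ has no $p$-torsion.

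The main step is a two-stage descent. From $[\Gam_n, \Gam_n] \le [\Del_i, \Del_i]$ and $\gamma^k \in [\Del_i, \Del_i]$, the image of $\gamma$ in $\Del_i^{ab}$ has order dividing $k$; writing $k = p^a m$ with $(m, p) = 1$, the absence of $p$-torsion in $\Del_i^{ab}$ forces that image to have order dividing $m$, i.e., $\gamma^m \in [\Del_i, \Del_i]$. Because $A$ is abelian, $\pi$ factors through $\Del_i^{ab}$, so $m \cdot \pi(\gamma) = \pi(\gamma^m) = 0$ in $A$. Since $A$ is a $p$-group and $(m, p) = 1$, multiplication by $m$ is invertible on $A$, so $\pi(\gamma) = 0$. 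Hence $\gamma \in \Del_i \cap \Del_{n+1} \le \Del_{n+1}$, as required.

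The main obstacle I expect to negotiate is that the hypothesis gives no direct control over $p$-torsion in $\Gam_n^{ab}$ itself once $n \ge 1$; the device above sidesteps this by exporting the question to $\Del_i^{ab}$, where $p$-rigidity is given by the hypothesis, and to the finite $p$-group $A$, where coprime multiplication is automatically invertible. Combining these two facts absorbs any latent $p$-power obstruction, so the potential $p$-torsion in $\Gam_n^{ab}$ never needs to be ruled out.
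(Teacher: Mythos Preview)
Your proof is correct and follows essentially the same route as the paper: reduce to showing $\gamma \in \Del_{n+1}$, pass to $\Del_i$ where the quotient is an abelian $p$-group, and use that $\Del_i^{ab}$ has no $p$-torsion. The only difference is stylistic---the paper phrases the key step as the containment $(\Gam_n)_r^{(1)} \le (\Del_i)_r^{(1)} \le \Del_i \cap \Del_{n+1}$ via ``the map factors through $\Del_i^{rab}$,'' while you unpack this into an explicit element-level argument with the factorization $k = p^a m$ and invertibility of $m$ on the $p$-group $A$.
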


\begin{pf}
Note that $\Del_0 = \Gam_0 = \Gam$ and
\[
\Del_0 / (\Del_0 \cap \Del_1) = \Gam_0 / \Gam_1
\]
is an abelian $p$-group. Since $\Gam_0^{ab} = \Gam^{ab}$ has no $p$-torsion, the projection from $\Gam_0$ onto $\Gam_0 / \Gam_1$ must factor through $\Gam_0^{rab}$, i.e., $(\Gam_0)_r^{(1)} \le \Gam_1$.

We now show that $(\Gam_{n+1})_r^{(1)} \le \Gam_{n+2}$ for all $n \ge 0$. Fixing $\de \in (\Gam_{n+1})_r^{(1)}$, we certainly have $\de \in \Gam_{n+1}$. Since $\Gam_{n+2} = \Gam_{n+1} \cap \Del_{n+2}$, to prove that $\{\Gam_n\}$ is a RFRS sequence, we must show that $\de \in \Del_{n+2}$.

Fix $0 \le i \le n+1$ such that $\Del_i / (\Del_i \cap \Del_{n+2})$ is an abelian $p$-group. Since $\Del_i \cong \Gam$, we see that $\Del_i^{ab}$ has no $p$-torsion, and hence $(\Del_i)_r^{(1)} \le (\Del_i \cap \Del_{n+2})$. Then $\Gam_{n+1} \le \Del_i$ by construction, and it is easy to see that the natural map $\Gam_{n+1}^{ab} \to \Del_i^{rab}$ induced by the inclusion must factor through the map from $\Gam_{n+1}$ to $\Gam_{n+1}^{rab}$. It follows that
\[
(\Gam_{n+1})_r^{(1)} \le (\Del_i)_r^{(1)} \le (\Del_i \cap \Del_{n+2}).
\]
This gives that $\de \in \Del_{n+2}$, as desired. Since $\{\Gam_n\}$ satisfies the other hypotheses to be a RFRS sequence by assumption, this completes the proof of the lemma.
\end{pf}

Our goal will be to apply Lemma \ref{lem:OurTower} to certain $\frakp$-congruence towers in arithmetic lattices. We now introduce these towers.

\subsection{$\frakp$-congruence towers}\label{ssec:Congruence}

We refer the reader to \cite[Ch.\ I and II]{Lang} for terminology and results from algebraic number theory used in this section and elsewhere in the paper. Let $k$ be a number field with integer ring $\calO_k$, $\calG \subseteq \GL_n(k)$ be a $k$-algebraic matrix group, and $\Gam = \calG(\calO_k)$. Given a prime ideal $\frakp$ of $\calO_k$ and $j \ge 1$, let $\Gam(\frakp^j)$ be the \emph{level} $\frakp^j$ \emph{congruence subgroup} of $\Gam$, i.e., all those elements that are congruent to the identity modulo $\frakp^j$. The collection $\{\Gam(\frakp^j)\}$ is the \emph{$\frakp$-congruence tower} for $\Gam$. This is a cofinal tower of normal subgroups of $\Gam$.

We record some elementary facts. Let $p$ be a rational prime. Recall that in a {\it $p$-group} every element has order a power of $p$, and in an {\it elementary $p$-group} every element has order $p$.

\begin{lem}\label{lem:pGps}
Suppose $k$ is a number field with integers $\calO_k$, $\calG \subseteq \GL_n(k)$ is a $k$-algebraic matrix group, and $\Gam = \calG(\calO_k)$. Let $\frakp$ be a prime ideal of $\calO_k$ and $p$ the characteristic of the finite field $\calO_k / \frakp$. Then:
\begin{enum}

\item For all $j \ge 1$, $\Gam(\frakp^j) / \Gam(\frakp^{j + 1})$ is an elementary abelian $p$-group.

\item For all $k > j \ge 1$, $\Gam(\frakp^j) / \Gam(\frakp^k)$ is a $p$-group.

\item For all $j \ge 2$, $\Gam(\frakp^j) / \Gam(\frakp^k)$ is abelian for every $k \le 2 j$. In particular, $\Gam(\frakp^j) / \Gam(\frakp^{j+2})$ is abelian.

\end{enum}
\end{lem}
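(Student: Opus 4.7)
The plan is to represent every element of $\Gam(\frakp^j)$ in the form $I + A$, where $A$ is an $n \times n$ matrix whose entries lie in the ideal $\frakp^j$ of $\calO_k$ (this makes sense because $\calG \subseteq \GL_n(k)$). The whole proof then reduces to binomial bookkeeping built on the identity $(I + A)(I + B) = I + A + B + AB$, together with the observation that the entries of $AB$ lie in $\frakp^{2j}$.

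For (1), the identity collapses modulo $\frakp^{j+1}$ to $(I+A)(I+B) \equiv I + A + B$, since $2j \ge j + 1$ whenever $j \ge 1$. This expression is symmetric in $A$ and $B$, so $\Gam(\frakp^j)/\Gam(\frakp^{j+1})$ is abelian. To see it is elementary, I would expand $(I + A)^p$ via the binomial theorem: every term $\binom{p}{i} A^i$ with $2 \le i \le p$ has entries in $\frakp^{2j} \subseteq \frakp^{j+1}$, while the linear term $pA$ has entries in $p \cdot \frakp^j \subseteq \frakp^{j+1}$ because $p \in \frakp$. Hence $(I + A)^p \equiv I \pmod{\frakp^{j+1}}$, and every element of the quotient has order dividing $p$.

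Part (2) then follows by iterating (1) along the filtration $\Gam(\frakp^j) \supseteq \Gam(\frakp^{j+1}) \supseteq \dots \supseteq \Gam(\frakp^k)$: each successive quotient is a finite $p$-group by (1), and an extension of $p$-groups is a $p$-group. For (3), the same expansion yields $(I+A)(I+B) \equiv I + A + B \pmod{\frakp^{2j}}$, which is again symmetric in $A$ and $B$; thus $\Gam(\frakp^j)/\Gam(\frakp^{2j})$ is abelian, and for $j < k \le 2j$ the group $\Gam(\frakp^j)/\Gam(\frakp^k)$ is a quotient of this abelian group and therefore abelian as well.

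There is no real technical obstacle — the whole lemma is routine ideal arithmetic together with the binomial theorem. The single point requiring care is the $pA$ contribution in the expansion of $(I+A)^p$: it lies in $\frakp^{j+1}$ precisely because $\frakp$ has residue characteristic $p$, which is exactly the hypothesis placing $p$ inside $\frakp$. Every other cross-term is killed by the cruder bound $\frakp^{2j} \subseteq \frakp^{j+1}$, valid for $j \ge 1$.
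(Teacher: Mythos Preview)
Your proof is correct and follows essentially the same binomial-expansion argument as the paper. The one difference worth noting is that the paper passes to the completion $\calO_\frakp$ in order to have a uniformizer $\pi$ and write elements as $\Id + \pi^j M$, whereas you work directly with the ideal $\frakp^j$ in $\calO_k$; the paper itself remarks afterward that the passage to the completion is only needed when $\frakp$ is non-principal, but your ideal-theoretic formulation sidesteps even that case.
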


\begin{pf}
Let $\calO_{\frakp}$ be the integral closure of $\calO_k$ in the completion $k_\frakp$ of $k$ with respect to its $\frakp$-adic norm. Fix a uniformizing element $\pi$ for $\calO_{\frakp}$. Then we have that $\calO_k / \frakp \cong \calO_{\frakp} / (\pi) \calO_{k, \frakp}$ and $p$ is the characteristic of this finite field.

If $\al \in \Gam(\frakp^j)$, then we can write
\[
\al = \Id + \pi^j M
\]
for some $M \in \M_n(\calO_{\frakp})$. Then
\[
\al^p = \sum_{k = 0}^p \binom{p}{k} \pi^{j k} M^k,
\]
which is visibly congruent to the identity modulo $\pi^{j+1}$. This proves that every element of $\Gam(\frakp^j) / \Gam(\frakp^{j+1})$ has order $p$.

Now, suppose that
\begin{align*}
\al &= \Id + \pi^j M \\
\beta &= \Id + \pi^j N
\end{align*}
for $\al, \beta \in \Gam(\frakp^j)$. Then:
\begin{align*}
\al \beta &= (\Id + \pi^j M)(\Id + \pi^j N) \\
&= \Id + \pi^j(M + N) + \pi^{2 j} M N \\
\beta \al &= (\Id + \pi^j N)(\Id + \pi^j M) \\
&= \Id + \pi^j(N + M) + \pi^{2 j} N M
\end{align*}
We see that $\al$ and $\beta$ commute modulo $\pi^k$ for all $k \le 2 j$. Since $2 j \ge j + 1$ for $j \ge 1$, w, this proves the first and third assertions of the lemma. The second statement is an immediate consequence of the first.
\end{pf}

\begin{rem}
Replacing $\calO_k$ with $\calO_{\frakp}$ in the proof of Lemma \ref{lem:pGps} is only necessary when $\frakp$ is not a principal ideal. When it is principal, one can implement the proof in $\calO_k$ instead with $\pi$ a generator for $\frakp$.
\end{rem}

\section{Examples}\label{sec:Exs}

We now describe the examples that suffice to prove the main results stated in the introduction. Our techniques work in much greater generality, and the reader will hopefully find these examples illustrative enough to apply our methods in other settings.

\subsection{The magic manifold}\label{ssec:oneBianchi}

It goes back to Thurston that the fundamental group $\Gam$ of the magic manifold arises from the congruence subgroup $\Gam(\frac{1+\sqrt{-7}}{2})$ inside $\PGL_2(\bbQ(\sqrt{-7}))$. It is homeomorphic to the complement in $S^3$ of the $3$-chain link $6^3_1$ (see Figure \ref{fig:Magic} and \cite[Ex.\ 6.8.2]{Thurston}).
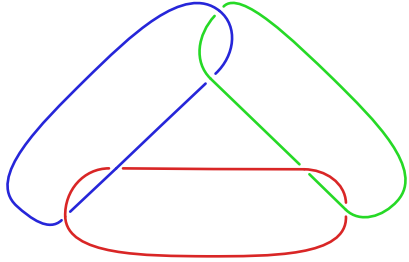
\begin{figure}
\centering
\definecolor{linkcolor0}{rgb}{0.85, 0.15, 0.15}
\definecolor{linkcolor1}{rgb}{0.15, 0.15, 0.85}
\definecolor{linkcolor2}{rgb}{0.15, 0.85, 0.15}
\begin{tikzpicture}[line width=1.0, line cap=round, line join=round,scale=0.6]
  \begin{scope}[color=linkcolor0]
    \draw (2.83, 2.06) .. controls (2.27, 2.06) and (1.84, 1.57) .. (1.86, 0.99);
    \draw (1.86, 0.99) .. controls (1.89, 0.12) and (3.62, 0.12) .. 
          (5.00, 0.11) .. controls (6.40, 0.11) and (8.11, 0.10) .. (8.08, 0.99);
    \draw (8.08, 1.30) .. controls (8.06, 1.74) and (7.63, 2.03) .. (7.16, 2.04);
    \draw (7.16, 2.04) .. controls (5.82, 2.04) and (4.48, 2.05) .. (3.14, 2.06);
  \end{scope}
  \begin{scope}[color=linkcolor1]
    \draw (5.28, 5.55) .. controls (4.50, 6.18) and (3.29, 4.99) .. 
          (2.30, 4.02) .. controls (1.31, 3.04) and (0.10, 1.86) .. 
          (0.78, 1.23) .. controls (1.10, 0.94) and (1.50, 0.65) .. (1.78, 0.91);
    \draw (1.97, 1.10) .. controls (2.31, 1.42) and (2.65, 1.74) .. (2.99, 2.06);
    \draw (2.99, 2.06) .. controls (3.65, 2.69) and (4.31, 3.32) .. (4.97, 3.94);
    \draw (5.19, 4.16) .. controls (5.62, 4.56) and (5.70, 5.21) .. (5.28, 5.55);
  \end{scope}
  \begin{scope}[color=linkcolor2]
    \draw (5.37, 5.65) .. controls (5.75, 6.04) and (6.91, 4.90) .. 
          (7.80, 4.04) .. controls (8.72, 3.13) and (9.84, 2.03) .. 
          (9.22, 1.34) .. controls (8.90, 0.98) and (8.39, 0.84) .. (8.08, 1.14);
    \draw (8.08, 1.14) .. controls (7.81, 1.40) and (7.54, 1.67) .. (7.27, 1.93);
    \draw (7.05, 2.15) .. controls (6.39, 2.78) and (5.74, 3.42) .. (5.08, 4.05);
    \draw (5.08, 4.05) .. controls (4.70, 4.42) and (4.79, 5.03) .. (5.17, 5.44);
  \end{scope}
\end{tikzpicture}
\caption{The magic manifold is the complement of the $3$-chain link.}\label{fig:Magic}
\end{figure}
Note that $\frakp = (\frac{1+\sqrt{-7}}{2})$ is a prime ideal dividing $2$. We will show that the magic manifold admits a $2$-congruence tower that is RFRS. Note that the magic manifold is itself fibered, so the fact that $\PSL_2(\calO_d)$ fibers on a congruence subgroup is not new in this case.

Since $\frakp$ has norm $2$, the completion of $\bbQ(\sqrt{-7})$ at $\frakp$ is $\bbQ_2$, and hence we obtain an embedding of $\Gam$ into $\PGL_2(\bbQ_2)$. Consider the action of $\PGL_2(\bbQ_2)$ on its Bruhat--Tits tree $\calT$, which is a $3$-regular tree (see Figure \ref{fig:Tree}).
\begin{figure}
\centering
\begin{tikzpicture}
\draw (0, 0) -- (1, 0);
\draw (0, 0) -- (-0.5, 0.866025403784439);
\draw (0, 0) -- (-0.5, -0.866025403784439);
\draw (1, 0) -- (1.25, 0.433012701892219);
\draw (1, 0) -- (1.25, -0.433012701892219);
\draw (-0.5, 0.866025403784439) -- (-0.25, 1.2990381);
\draw (-0.5, 0.866025403784439) -- (-1, 0.866025);
\draw (-0.5, -0.866025403784439) -- (-1, -0.8660254);
\draw (-0.5, -0.866025403784439) -- (-0.25, -1.29903);
\draw[densely dotted] (1.25, 0.433012701892219) -- (1.5000000, 0.4330127);
\draw[densely dotted] (1.25, 0.433012701892219) -- (1.12500000, 0.64951905);
\draw[densely dotted] (1.25, -0.433012701892219) -- (1.12500000, - 0.64951905);
\draw[densely dotted] (1.25, -0.433012701892219) -- (1.5000000, - 0.4330127);
\draw[densely dotted] (-0.25, 1.2990381) -- (0, 1.29903811);
\draw[densely dotted] (-0.25, 1.2990381) -- (-0.3750000, 1.5155445);
\draw[densely dotted] (-1, 0.866025) -- (-1.1250000, 1.0825318);
\draw[densely dotted] (-1, 0.866025) -- (-1.12500000, 0.64951905);
\draw[densely dotted] (-0.25, -1.2990381) -- (-0.3750000, - 1.5155445);
\draw[densely dotted] (-0.25, -1.2990381) -- (0, - 1.29903811);
\draw[densely dotted] (-1, -0.866025) -- (-1.12500000, - 0.64951905);
\draw[densely dotted] (-1, -0.866025) -- (-1.1250000, - 1.0825318);
\draw[fill=red] (0, 0) circle [radius=.07];
\draw[fill=red] (1, 0) circle [radius=.06];
\draw[fill=red] (-0.5, 0.866025403784439) circle [radius=.06];
\draw[fill=red] (-0.5, -0.866025403784439) circle [radius=.06];
\draw[fill=red] (1.25, 0.433012701892219) circle [radius=.05];
\draw[fill=red] (1.25, -0.433012701892219) circle [radius=.05];
\draw[fill=red] (-0.25,1.2990381) circle [radius=.05];
\draw[fill=red] (-1, 0.866025) circle [radius=.05];
\draw[fill=red] (-1, -0.8660254) circle [radius=.05];
\draw[fill=red] (-0.25, -1.29903) circle [radius=.05];
\end{tikzpicture}
\caption{The Bruhat--Tits tree $\calT$ for $\PGL_2(\bbQ_2)$.} \label{fig:Tree}
\end{figure}
We briefly recall that vertices of $\calT$ are homothety classes of $\bbZ_2$-lattices in $\bbQ_2^2$, and two vertices $[\calL_1]$ and $[\calL_2]$ are adjacent if and only if there are representatives in the homothety classes such that $\calL_2 \subseteq \calL_1$ with $\calL_1 / \calL_2$ isomorphic to the finite field $\bbF_2$ with two elements. See \cite[\S II.1]{Serre} for details.

Then $\Gam \le \PGL_2(\bbZ_2)$ naturally stabilizes the vertex $v_0$ associated with the standard lattice $\bbZ_2^2$. Notice that $\PGL_2(\bbQ(\sqrt{-7}))$ acts transitively on $\calT$, and the element
\[
g_1 = \begin{pmatrix} 0 & 2 \\ 1 & 0 \end{pmatrix}
\]
exchanges $v_0$ with a neighbor $v_1$. One then checks that
\[
\Gam(\frakp^2) \le \Gam \cap g_1 \Gam g_1^{-1} \le \Gam,
\]
and it follows from Lemma \ref{lem:pGps} that $\Gam /(\Gam \cap g_1 \Gam g_1^{-1})$ is an elementary abelian $2$-group.

We now define $g_n \in \PGL_2(\bbQ(\sqrt{-7}))$ and $v_n = g_n(v_0) \in \calT$ by choosing some $v_i$ for $0 \le i \le n-1$ for which not all neighbors of $v_i$ are contained in $\{v_0, \dots, v_{n-1}\}$, letting $v_n$ be one such neighbor of $v_i$, and taking $g_n$ to be the conjugate of $g_1$ in $\PGL_2(\bbQ(\sqrt{-7}))$ that swaps $v_i$ and $v_n$. Define $\Del_n = g_n \Gam g_n^{-1}$. Then
\[
\Del_n / (\Del_n \cap \Del_i)
\]
is an elementary abelian $2$-group by the same reasoning that we applied to $\Del_0 / (\Del_1 \cap \Del_0)$.

Let $v_n$ range over all vertices of $\calT$. Defining
\[
\Gam_n = \bigcap_{i = 0}^n \Gam_i,
\]
we have that $\bigcap \Gam_n$ lies in the stabilizer in $\PGL_2(\bbQ_2)$ of every homothety class of lattices in $\bbQ_2^2$, which is clearly trivial. Therefore $\{\Gam_n\}$ is cofinal. In particular, Lemma \ref{lem:OurTower} applies to show that this tower is RFRS.

\begin{rem}
This idea applies to any principal congruence arithmetic link. In \cite{BGR} it is shown that there are principal congruence links for discriminant $d= 1,2, 3, 5, 7, 11, 15, 19, 23, 31, 47, 71$. This includes discriminants $d=7,  15, 23, 31,47 , 71$ that are congruent to $-1$ mod $8$; these values of $d$ are not handled by the next section. More generally, this construction works for congruence subgroups of arithmetic Kleinian groups with no $p$-torsion in their $1^{st}$ homology for the appropriate $p$ (see Appendix A by \c{S}eng\"un for further examples).
\end{rem}

\subsection{Bianchi groups and $\Orth(4, 1; \bbZ)$}\label{ssec:moreBianchi}

Consider the quadratic form $q_0$ in $5$ variables with matrix
\[
Q_0 = \mathrm{diag}(1, 1, 1, 1, -1),
\]
and let $\Orth(4, 1; \bbZ)$ be the group of integral automorphisms of $q_0$. Then $\Orth(4, 1; \bbZ)$ determines a nonuniform arithmetic lattice in $\Orth(4,1)$. For an integer $N \ge 1$, let $\Gam(N)$ denote the congruence subgroup of $\Orth(4, 1; \bbZ)$ of level $N$.

It is known that $\Orth(4, 1; \bbZ)$ is the group generated by reflections in the simplex in hyperbolic $4$-space with Coxeter diagram given in Figure \ref{fig:Cox4}. Moreover, the congruence subgroup $\Gam(2)$ of level two is the right-angled Coxeter group generated by reflections in the sides of a polyhedron obtained from $120$ copies of the simplex for $\Orth(4, 1; \bbZ)$. See \cite{RatcliffeTschantz}.
\begin{figure}
\centering
\begin{tikzpicture}
\draw (-2,0) -- (1,0);
\draw (0,0) -- (0,-1);
\draw[fill=black] (-2, 0) circle [radius=.075];
\draw[fill=black] (-1, 0) circle [radius=.075];
\draw[fill=black] (0, 0) circle [radius=.075];
\draw[fill=black] (1, 0) circle [radius=.075];
\draw[fill=black] (0, -1) circle [radius=.075];
\node at (-1.5,0.25) {$3$};
\node at (-0.5,0.25) {$3$};
\node at (0.5,0.25) {$4$};
\node at (0.25,-0.5) {$3$};
\end{tikzpicture}
\caption{The Coxeter diagram for $\Orth(4, 1; \bbZ)$.} \label{fig:Cox4}
\end{figure}

It will be convenient to change coordinates. The matrix
\[
\al = \begin{pmatrix}
1 & 1 & 0 & 0 & 0 \\
0 & -1 & 0 & 0 & 1 \\
0 & 0 & 1 & 0 & 0 \\
0 & 0 & 0 & 1 & 0 \\
1 & 1 & 0 & 0 & -1
\end{pmatrix} \in \SL_5(\bbZ)
\]
conjugates $\Orth(4, 1; \bbZ)$ to $\Orth(q; \bbZ)$, where $q$ is the quadratic form with matrix
\[
Q = \begin{pmatrix}
0 & 0 & 0 & 0 & 1 \\
0 & 1 & 0 & 0 & 0 \\
0 & 0 & 1 & 0 & 0 \\
0 & 0 & 0 & 1 & 0 \\
1 & 0 & 0 & 0 & 0
\end{pmatrix},
\]
i.e., $Q = {}^t\al Q_0 \al$. Since $\al$ is integral of determinant one, it preserves all congruence subgroups of $\SL_5(\bbZ)$. Thus $\al \Gam(N) \al^{-1}$ is the level $N$ congruence subgroup of $\Orth(q; \bbZ)$ for all $N \ge 1$, and we continue calling it just $\Gam(N)$.

We will also need the Bruhat--Tits building associated with $\Orth(q; \bbQ_2)$, which is a $(5,3)$-regular tree $\calT$. See \cite[\S2.7]{Tits} and Figure \ref{fig:2Tree}. Considering $\Orth(q; \bbQ_2)$ as a subgroup of $\GL_5(\bbQ_2)$, we obtain an injection of buildings $\calT \hookrightarrow X$, where $X$ is the building associated with $\PGL_5(\bbQ_2)$. We briefly describe $\calT$ using this embedding.
\begin{figure}
\centering
\begin{tikzpicture}
\draw (0, 0) -- (1, 0);
\draw (0, 0) -- (0.309017, 0.951057);
\draw (0, 0) -- (-0.809017, 0.587785);
\draw (0, 0) -- (-0.809017, -0.587785);
\draw (0, 0) -- (0.309017, -0.951057);
\draw (1, 0) -- (1.25, -0.433013);
\draw (1, 0) -- (1.25, 0.433013);
\draw (0.309017, 0.951057) -- (0.798091, 1.05501);
\draw (0.309017, 0.951057) -- (-0.0255483, 1.32263);
\draw (-0.809017, 0.587785) -- (-0.756753, 1.08505);
\draw (-0.809017, 0.587785) -- (-1.26579, 0.384417);
\draw (-0.809017, -0.587785) -- (-1.26579, - 0.384417);
\draw (-0.809017, -0.587785) -- (-0.756753, - 1.08505);
\draw (0.309017, -0.951057) -- (-0.0255483, - 1.32263);
\draw (0.309017, -0.951057) -- (0.798091, - 1.05501);
\draw[densely dotted] (1.25, -0.433013) -- (1.00546310, - 0.4849906);
\draw[densely dotted] (1.25, -0.433013) -- (1.22386788, - 0.68164318);
\draw[densely dotted] (1.25, -0.433013) -- (1.4783864, - 0.5346969);
\draw[densely dotted] (1.25, -0.433013) -- (1.4172827, - 0.2472265);
\draw[densely dotted] (1.25, 0.433013) -- (1.4172827, 0.2472265);
\draw[densely dotted] (1.25, 0.433013) -- (1.4783864, 0.5346969);
\draw[densely dotted] (1.25, 0.433013) -- (1.22386788, 0.68164318);
\draw[densely dotted] (1.25, 0.433013) -- (1.00546310, 0.48499062);
\draw[densely dotted] (0.798091, 1.05501) -- (0.77195868, 0.80638189);
\draw[densely dotted] (0.798091, 1.05501) -- (1.02647716, 0.95332820);
\draw[densely dotted] (0.798091, 1.05501) -- (0.9653734, 1.2407986);
\draw[densely dotted] (0.798091, 1.05501) -- (0.6730908, 1.2715187);
\draw[densely dotted] (-0.0255483, 1.32263) -- (0.2028381, 1.4243131);
\draw[densely dotted] (-0.0255483, 1.32263) -- (-0.0516804, 1.5712594);
\draw[densely dotted] (-0.0255483, 1.32263) -- (-0.27008521, 1.37460685);
\draw[densely dotted] (-0.0255483, 1.32263) -- (-0.15054831, 1.10612258);
\draw[densely dotted] (-0.756753, 1.08505) -- (-0.52836640, 0.98336204);
\draw[densely dotted] (-0.756753, 1.08505) -- (-0.58947011, 1.27083241);
\draw[densely dotted] (-0.756753, 1.08505) -- (-0.8817528, 1.3015526);
\draw[densely dotted] (-0.756753, 1.08505) -- (-1.0012897, 1.0330683);
\draw[densely dotted] (-1.26579, 0.384417) -- (-1.2919218, 0.6330474);
\draw[densely dotted] (-1.26579, 0.384417) -- (-1.5103266, 0.4363949);
\draw[densely dotted] (-1.26579, 0.384417) -- (-1.39078972, 0.16791058);
\draw[densely dotted] (-1.26579, 0.384417) -- (-1.09850707, 0.19863072);
\draw[densely dotted] (-1.26579, - 0.384417) -- (-1.09850707, - 0.19863072);
\draw[densely dotted] (-1.26579, - 0.384417) -- (-1.39078972, - 0.16791058);
\draw[densely dotted] (-1.26579, - 0.384417) -- (-1.5103266, - 0.4363949);
\draw[densely dotted] (-1.26579, - 0.384417) -- (-1.2919218, - 0.6330474);
\draw[densely dotted] (-0.756753, - 1.08505) -- (-1.0012897, - 1.0330683);
\draw[densely dotted] (-0.756753, - 1.08505) -- (-0.8817528, - 1.3015526);
\draw[densely dotted] (-0.756753, - 1.08505) -- (-0.58947011, - 1.27083241);
\draw[densely dotted] (-0.756753, - 1.08505) -- (-0.52836640, - 0.98336204);
\draw[densely dotted] (-0.0255483, - 1.32263) -- (-0.15054831, - 1.10612258);
\draw[densely dotted] (-0.0255483, - 1.32263) -- (-0.27008521, - 1.37460685);
\draw[densely dotted] (-0.0255483, - 1.32263) -- (-0.0516804, - 1.5712594);
\draw[densely dotted] (-0.0255483, - 1.32263) -- (0.2028381, - 1.4243131);
\draw[densely dotted] (0.798091, - 1.05501) -- (0.6730908, - 1.2715187);
\draw[densely dotted] (0.798091, - 1.05501) -- (0.9653734, - 1.2407986);
\draw[densely dotted] (0.798091, - 1.05501) -- (1.02647716, - 0.95332820);
\draw[densely dotted] (0.798091, - 1.05501) -- (0.77195868, - 0.80638189);
\draw[fill=red] (0, 0) circle [radius=.07];
\draw[fill=blue] (1, 0) circle [radius=.06];
\draw[fill=blue] (0.309017, 0.951057) circle [radius=.06];
\draw[fill=blue] (-0.809017, 0.587785) circle [radius=.06];
\draw[fill=blue] (-0.809017, -0.587785) circle [radius=.06];
\draw[fill=blue] (0.309017, -0.951057) circle [radius=.06];
\draw[fill=red] (1.25, -0.433013) circle [radius=.05];
\draw[fill=red] (1.25, 0.433013) circle [radius=.05];
\draw[fill=red] (0.798091, 1.05501) circle [radius=.05];
\draw[fill=red] (-0.0255483, 1.32263) circle [radius=.05];
\draw[fill=red] (-0.756753, 1.08505) circle [radius=.05];
\draw[fill=red] (-1.26579, 0.384417) circle [radius=.05];
\draw[fill=red] (-1.26579, - 0.384417) circle [radius=.05];
\draw[fill=red] (-0.756753, - 1.08505) circle [radius=.05];
\draw[fill=red] (-0.0255483, - 1.32263) circle [radius=.05];
\draw[fill=red] (0.798091, - 1.05501) circle [radius=.05];
\end{tikzpicture}
\caption{The Bruhat--Tits tree $\calT$ for $\Orth(q; \bbQ_2)$.} \label{fig:2Tree}
\end{figure}

As in the $2$-dimensional case, vertices of $X$ are in one-to-one correspondence with homothety classes of lattices in $\bbQ_2^5$, where vertices $x$ and $y$ are adjacent if there are representatives $\calL_x$ and $\calL_y$ for the two homothety classes so that $2 \calL_x \subset \calL_y \subset \calL_x$. See \cite[Exer.\ II.1.4]{Serre}. We fix the base vertex
\[
x_0 = [\langle e_1, \dots, e_5 \rangle ],
\]
where $\{e_i\}$ is the basis for which $q$ has the given matrix and $\langle\,-\,\rangle$ denotes the $\bbZ_2$-span. The stabilizer of $x_0$ in $\GL_5(\bbQ_2)$ is generated by $\GL_5(\bbZ_2)$ and the scalar matrices. Then $\Orth(q; \bbZ_2)$ stabilizes $x_0$, which implies that we can realize $\calT$ as the convex hull of the $\Orth(q; \bbQ_2)$-orbit in $X$ of $x_0$.

The apartment $A$ of $X$ associated with the $\bbQ_2$-split torus of diagonal matrices in $\GL_5(\bbQ_2)$ (e.g., see \cite[\S 1]{Tits}) can be identified with the set of homothety classes
\[
\left\{ [ \langle 2^{r_1} e_1, \dots, 2^{r_5} e_5 \rangle ] \right\}_{r_i \in \bbZ},
\]
i.e., the orbit of $x_0$ under the diagonal subgroup. The $\bbQ_2$-split torus of diagonal matrices in the rank one group $\Orth(q; \bbQ_2)$ is
\[
S = \left\{ \begin{pmatrix} \lam & 0 & 0 \\ 0 & \Id & 0 \\ 0 & 0 & \lam^{-1} \end{pmatrix}\ :\ \lam \in \bbQ_2^* \right\},
\]
where $\Id$ is the $3 \times 3$ identity matrix, and the convex hull of the $S$-orbit of $x_0$ is then the apartment $A_0$ of $\calT$ associated with $S$. The $S$-orbit of $x_0$ is
\[
x_r = [\langle 2^r e_1, e_2, \dots, e_4, 2^{-r} e_5 \rangle ]
\]
for $r \in \bbZ$, and its convex hull also includes the vertices
\[
x_{r + \frac{1}{2}} = [\langle 2^{r+1} e_1, e_2, \dots, e_4, 2^{-r} e_5 \rangle ].
\]
We see that $A_0$ is a line with vertex set $\{x_\al\ :\ \al \in \frac{1}{2} \bbZ \}$, where $x_\al$ is adjacent to $x_\beta$ if and only if $|\al - \beta| = \frac{1}{2}$.

Since $\Orth(q; \bbQ_2)$ acts transitively on apartments of $\calT$ \cite[\S 2]{Tits}, the vertex set of $\calT$ is the $\Orth(q; \bbQ_2)$-orbit of $\{x_0, x_{\frac{1}{2}}\}$. In particular, to prove that $\calT$ is a $(5,3)$-regular tree we need to prove the following two lemmas.

\begin{lem}\label{lem:Valence5}
The vertex $x_0 \in \calT$ has valence $5$.
\end{lem}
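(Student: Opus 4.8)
The plan is to determine the neighbors of $x_0$ in $\calT$ by computing, inside the $\PGL_5(\bbQ_2)$-building $X$, which lattices $\calL$ with $2\calL_{x_0}\subset\calL\subset\calL_{x_0}$ are fixed (as a homothety class) by some hyperbolic isometry of $q$ taking $x_0$ to a neighbor — equivalently, which of the neighbors of $x_0$ in $X$ lie on the subcomplex $\calT$, i.e., are represented by a lattice that is ``self-dual up to scaling'' with respect to $q$ in the appropriate sense. Concretely, the neighbors of $x_0$ in $X$ correspond to the nonzero proper subspaces of the $\bbF_2$-vector space $V = \calL_{x_0}/2\calL_{x_0} \cong \bbF_2^5$, via $W \mapsto [\pi^{-1}(W)]$ where $\pi:\calL_{x_0}\to V$ is reduction mod $2$. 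I would first recall (citing \cite{Tits} or working it out from the lattice-chain description of the building of $\Orth(q)$) the criterion picking out which of these neighbors lie in $\calT$: the reduction $\bar q$ of $q$ modulo $2$ is a quadratic form on $V$, and the vertices of $\calT$ adjacent to $x_0$ are exactly those $[\pi^{-1}(W)]$ for which $W$ is a \emph{totally isotropic line} (a one-dimensional totally singular subspace) for $\bar q$, together with the observation that the ``other half'' of neighbors — those of the shape $x_{r+\frac12}$ — are accounted for by $4$-dimensional subspaces but these are in bijection with the isotropic lines via $W\mapsto W^{\perp}$, so the count of neighbors equals the number of isotropic $\bbF_2$-lines in $(V,\bar q)$.

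The second and computational step is then to count the isotropic points of the projective quadric defined by $\bar q$ over $\bbF_2$. Reducing the given matrix $Q$ mod $2$, the form is $\bar q(x) = 2x_1x_5 + x_2^2 + x_3^2 + x_4^2 \equiv x_2^2 + x_3^2 + x_4^2 \pmod 2$; but one must be careful, since $Q$ has even diagonal in the $(1,5)$ hyperbolic block, the correct quadratic form (not just the associated bilinear form) on $\bbF_2^5$ coming from the lattice is $\bar q(x) = x_1x_5 + x_2^2 + x_3^2 + x_4^2$. Over $\bbF_2$ this is a nondegenerate quadratic form in $5$ variables, hence of parabolic type $B_2$ on the rank-$4$ part plus a radical line; the number of projective isotropic points of a nondegenerate parabolic quadric in $\mathrm{PG}(4,2)$ is $(2^4-1)/(2-1) = 15$ — wait, that counts the whole hyperplane section; rather, one uses that a parabolic quadric $Q(4,2)$ has exactly $q^3+q^2+q+1 = 15$ points over $\bbF_2$, but here, because the $5$-variable form splits off $x_1x_5$ (a hyperbolic plane) from $x_2^2+x_3^2+x_4^2$, I would instead count solutions directly: solve $x_1x_5 = x_2^2+x_3^2+x_4^2$ in $\bbF_2^5$, subtract the zero vector, divide by $|\bbF_2^*| = 1$, and I expect to land on $5$ isotropic lines meeting the self-dual condition after imposing the constraint that $W$ actually corresponds to a neighbor in $\calT$ rather than merely in $X$. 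An alternative, cleaner route: use the apartment $A_0$ already computed in the text, note that $\Orth(q;\bbZ_2)$ acts on the link of $x_0$ and that this link is a single $\Orth(q;\bbF_2)$-orbit; then $\mathrm{valence}(x_0) = [\,\mathrm{stab}(x_0) : \mathrm{stab}(x_0,x_{1/2})\,]$, which is the index of a maximal parabolic in the finite orthogonal group $\Orth_5(\bbF_2) \cong \mathrm{Sp}_4(\bbF_2) \cong S_6$, and $|S_6|/|\mathrm{stab}| = 720/144 = 5$.

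I would carry this out in the following order: (1) set up the lattice-chain/building dictionary and reduce to counting a specific set of subspaces of $\bbF_2^5$; (2) identify the relevant finite group $\Orth(q;\bbF_2)$ and the stabilizer of the pair $\{x_0,x_{1/2}\}$ as a maximal parabolic; (3) compute the index, getting $5$. The main obstacle I anticipate is purely bookkeeping: getting the ``integral'' quadratic form on $\calL_{x_0}/2\calL_{x_0}$ right (the distinction between the quadratic form and its polar bilinear form matters in characteristic $2$) and correctly identifying which neighbors in $X$ survive in $\calT$ — i.e., pinning down the precise local structure of the Bruhat–Tits tree of a rank-one orthogonal group from \cite{Tits}. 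Once the combinatorial model is correctly identified, the count itself is immediate, and the parallel Lemma about valence $3$ will follow by the same computation with the stabilizer of a \emph{different} vertex type (the $x_{1/2}$-type vertices), whose link is the $3$-point set of isotropic lines in the smaller space.
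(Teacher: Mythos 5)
There is a genuine gap, and it sits exactly at the point you flag as ``bookkeeping'': identifying which neighbors of $x_0$ in $X$ actually lie in $\calT$. Your proposed criterion --- isotropic lines for the nondegenerate mod-$2$ quadratic form $x_1x_5+x_2^2+x_3^2+x_4^2$ --- gives the $15$ points of a parabolic quadric $Q(4,2)$, not $5$, and your proof never resolves this discrepancy beyond ``I expect to land on $5$ after imposing the constraint,'' with the constraint left unspecified. The correct invariant is finer than the $\bbF_2$-valued reduction: since $q(w+2u)-q(w)=4\bigl(q(u)+B(w,u)\bigr)$, the value $q(v)\bmod 4$ is well defined on $V_0=\calL_0/2\calL_0$ and is preserved by the reduction $G_0$ of $\Orth(q;\bbZ_2)$. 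The neighbors of $x_0$ in $\calT$ correspond to the primitive classes $v$ with $q(v)\equiv 0\pmod 4$, namely $\conj{e}_1$, $\conj{e}_5$, and the three vectors $\conj{e}_1+\conj{e}_5+(\text{two of }\conj{e}_2,\conj{e}_3,\conj{e}_4)$ --- exactly the $5$-element $G_0$-orbit of $\conj{e}_5$ that the paper computes. Without pinning down this refinement, your count does not close.

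Your ``cleaner alternative'' does not repair this, because it is arithmetically impossible as stated: $S_6$ has no subgroup of index $5$ (a subgroup of index $<6$ would give a transitive map $S_6\to S_5$, whose kernel, being normal, must contain $A_6$), so there is no ``maximal parabolic of order $144$'' in $\Orth_5(\bbF_2)\cong\Sp_4(\bbF_2)\cong S_6$; the actual maximal parabolics there have index $15$. The underlying error is the identification of the residual group at $x_0$ with split $\Orth_5(\bbF_2)$: if that were the reductive quotient, the link of $x_0$ would have $15$ vertices and $\calT$ would not be $(5,3)$-biregular. The reduction map $\Orth(q;\bbZ_2)\to\GL(V_0)$ lands in the stabilizer of the $\bbZ/4$-valued form above, and it is the orbit of $\conj{e}_5$ under that image --- not under the full finite orthogonal group --- that must be computed. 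Once you replace your criterion with this one, the count of $5$ follows as in the paper.
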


\begin{lem}\label{lem:Valence3}
The vertex $x_{\frac{1}{2}} \in \calT$ has valence $3$.
\end{lem}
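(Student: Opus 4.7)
The plan is to mirror the strategy of Lemma \ref{lem:Valence5}. Neighbors of $x_{1/2}$ in the larger building $X$ correspond to nonzero proper $\bbF_2$-subspaces $W$ of $V := \calL_{1/2}/2\calL_{1/2} \cong \bbF_2^5$; namely, each such $W$ is the image of a unique lattice $\calL_W$ with $2\calL_{1/2} \subsetneq \calL_W \subsetneq \calL_{1/2}$. To identify the neighbors that lie in $\calT$, I check when $[\calL_W]$ lies in the $\Orth(q; \bbQ_2)$-orbit of $x_0$; this is equivalent to asking whether the quadratic $\bbZ_2$-lattice $(\calL_W, q)$, after a suitable $\bbQ_2^*$-rescaling, is isometric to $(\calL_0, q)$. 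In the $\calL_{1/2}$-basis $(2e_1, e_2, e_3, e_4, e_5)$, the form reads $q(a) = 4 a_1 a_5 + a_2^2 + a_3^2 + a_4^2$, so the behavior of $q$ on $\calL_W$ depends only on the reduction $\bar v \in V$ of a chosen generator.

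I then exhibit three explicit neighbors of $x_{1/2}$ in $\calT$. First, $x_0 = [\calL_0]$ itself arises from $W_0 := \bbF_2 \cdot \overline{2e_1}$, because $\bbZ_2(2e_1) + 2\calL_{1/2} = 2\calL_0$ is homothetic to $\calL_0$. Second, $x_1 := [\langle 2e_1, e_2, e_3, e_4, \tfrac{1}{2}e_5\rangle]$ is the neighbor on the opposite side of $x_{1/2}$ in the apartment $A_0$; it lies in the $\Orth(q;\bbQ_2)$-orbit of $x_0$ via the involution $h$ defined by $h(e_1) = \tfrac{1}{2}e_5$, $h(e_5) = 2 e_1$, and $h(e_i) = e_i$ for $i = 2, 3, 4$, which preserves both $q$ and $\calL_{1/2}$ but sends $\calL_0$ to the indicated representative of $x_1$. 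Third, applying an Eichler-type element of $\Orth(q; \bbQ_2)$ that preserves $\calL_{1/2}$ but not $\calL_0$ (obtained by a Siegel transformation along an isotropic direction paired with a half-integral anisotropic vector) yields an off-axis neighbor distinct from the previous two.

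To show these three are all, I use that the stabilizer $P_{1/2} := \mathrm{Stab}_{\Orth(q; \bbQ_2)}(x_{1/2})$ acts transitively on the set of neighbors of $x_{1/2}$ in $\calT$, so the valence equals $[P_{1/2} : I]$ where $I := P_0 \cap P_{1/2}$ is the Iwahori subgroup. A parahoric calculation identifies the reductive quotient of $P_{1/2}$ over $\bbF_2$ as having a Borel subgroup of index $3$, reflecting the fact that the link at the non-hyperspecial vertex of the $\tilde{BC}_1$ building is modeled on $\bbP^1(\bbF_2)$. The main technical issue is the characteristic-$2$ analysis at $x_{1/2}$, where the reduction of $q$ is degenerate; the explicit construction of the three neighbors above, together with the transitivity of $P_{1/2}$, gives a concrete enumeration that bypasses the general structure theory.
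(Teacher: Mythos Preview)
Your setup mirrors the paper's, but the argument for the upper bound does not close. Exhibiting three neighbors together with transitivity of $P_{1/2}$ on the link only shows the valence is a multiple of~$1$ (all neighbors lie in a single orbit); it does not bound the orbit size from above. You seem to recognize this, since you first invoke a ``parahoric calculation'' that the Borel has index~$3$ in the reductive quotient of $P_{1/2}$, but then say the explicit three neighbors ``bypass'' that structure theory. They do not: without either carrying out the index calculation or directly enumerating the orbit, you have only the lower bound. Your construction of the third neighbor is also left vague (an ``Eichler-type element'' with a ``half-integral anisotropic vector'' is gestured at but not written down or checked).

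The paper's proof avoids this by staying entirely inside the $\bbF_2$-picture, exactly as in Lemma~\ref{lem:Valence5}. Write $\calL_{1/2} = \langle f_1,\dots,f_5\rangle$ with $f_1 = 2e_1$, $f_i = e_i$ for $i\ge 2$, and set $V_{1/2} = \calL_{1/2}/2\calL_{1/2}$. The reduction $\bar q_{1/2}$ is degenerate with two-dimensional radical spanned by $\bar f_1,\bar f_5$. The neighbor $x_0$ corresponds (via $2\calL_{1/2}\subset 2\calL_0\subset \calL_{1/2}$) to the line $\langle\bar f_1\rangle$, and one then computes the orbit of this line under the image of $P_{1/2}$ in $\GL(V_{1/2})$: it is precisely the three lines $\langle\bar f_1\rangle,\ \langle\bar f_5\rangle,\ \langle\bar f_1+\bar f_5\rangle$ in the radical. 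That single orbit computation gives both bounds at once and makes the characteristic-$2$ degeneracy a help rather than an obstacle---the radical is exactly where the three neighbors live. This is both shorter and more concrete than the parahoric/reductive-quotient route you sketch, and it is what you should do to finish the proof.
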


\begin{pf}[Proof of Lemma \ref{lem:Valence5}]
We must compute the $\Orth(q; \bbZ_2)$-orbit of $x_{\frac{1}{2}}$. We define $\calL_0 = \langle e_1, \dots, e_5 \rangle$ and $\calL_{\frac{1}{2}} = \langle 2 e_1, e_2, \dots, e_5 \rangle$. Neighbors of $x_0$ in $X$ are in one-to-one correspondence with proper nonzero subspaces of
\[
V_0 = \calL_0 / 2 \calL_0 \cong \bbF_2^5.
\]
Let $\{\conj{e}_i\}$ be the basis for $V_0$ induced by $\{e_i\}$.

If $\conj{q}_0$ denotes the quadratic form on $V_0$ induced by the restriction of $q$ to $\calL_0$, then we see that the image of $\calL_{\frac{1}{2}}$ in $V_0$ is the $\conj{q}_0$-orthogonal complement $\conj{e}_5^\perp$ of $\conj{e}_5$, which we note is a codimension one subspace that contains the isotropic vector $\conj{e}_5$. To prove the lemma it then suffices to compute the orbit of $\conj{e}_5^\perp$ under the image $G_0$ of $\Orth(q; \bbZ_2)$ under reduction modulo $2$ (e.g., see \cite[\S3.5.4]{Tits}).

Since $v^\perp = \conj{e}_5^\perp$ if and only if $v = \conj{e}_5$, it moreover suffices to compute the $G_0$-orbit of $\conj{e}_5$. One checks that this orbit is
\[
\{ \conj{e}_5\,,\, \conj{e}_1\,,\, \conj{e}_1 + \conj{e}_2 + \conj{e}_3 + \conj{e}_5\,,\, \conj{e}_1 + \conj{e}_2 + \conj{e}_4 + \conj{e}_5\,,\, \conj{e}_1 + \conj{e}_3 + \conj{e}_4 + \conj{e}_5\}.
\]
This proves the lemma.
\end{pf}

\begin{pf}[Proof of Lemma \ref{lem:Valence3}]
The proof is very similar to the proof of Lemma \ref{lem:Valence5}, so we sketch the argument and leave it to the reader to verify the details. With notation as in that proof, we consider
\[
2 \calL_{\frac{1}{2}} \subset 2 \calL_0 \subset \calL_{\frac{1}{2}} = \langle f_1, \dots, f_5 \rangle.
\]
Then $V_{\frac{1}{2}} = \calL_{\frac{1}{2}} / 2 \calL{\frac{1}{2}}$ is a vector space with basis $\{\conj{f}_i\}$ with respect to which the quadratic form $\conj{q}_{\frac{1}{2}}$ contains a two-dimensional totally degenerate subspace spanned by $\conj{f}_1$ and $\conj{f}_5$.

Since the image of $2 \calL_0$ in $V_{\frac{1}{2}}$ is the line spanned by $\conj{f}_1$, we must compute its orbit under the reduction modulo $2$ of the stabilizer in $\Orth(q; \bbQ_2)$ of $\calL_{\frac{1}{2}}$. One shows that this orbit consists of the lines spanned by $\conj{f}_1$, $\conj{f}_5$, and $\conj{f}_1 + \conj{f}_5$, and the lemma follows.
\end{pf}

\begin{rem}
The neighbors of $x_0$ in $A_0$ are $x_{\frac{1}{2}}$ and $x_{-\frac{1}{2}}$. The other neighbors have representatives:
\begin{align*}
&\langle 2 e_1\,,\, e_1 + e_2\,,\, e_1 + e_3\,,\, e_4\,,\, e_1+e_2+e_3+e_5 \rangle \\
&\langle 2 e_1\,,\, e_1 + e_2\,,\, e_3\,,\, e_1 + e_4\,,\, e_1+e_2+e_4+e_5 \rangle \\
&\langle 2 e_1\,,\, e_2\,,\, e_1 + e_3\,,\, e_1 + e_4\,,\, e_1+e_3+e_4+e_5 \rangle
\end{align*}
Similarly, $x_{\frac{1}{2}}$ has neighbors $x_0$ and $x_1$ along with the vertex with representative
\[
\langle e_1 + \frac{1}{2} e_5\,,\, e_2\,,\, e_3\,,\, e_4\,,\, e_5 \rangle,
\]
which is the image of $\calL_0$ under the matrix
\[
\begin{pmatrix}
1 & 0 & 0 & 0 & 0 \\
1 & 1 & 0 & 0 & 0 \\
0 & 0 & 1 & 0 & 0 \\
0 & 0 & 0 & 1 & 0 \\
-\frac{1}{2} & -1 & 0 & 0 & 1
\end{pmatrix} \in \SO(q; \bbQ_2)
\]
that stabilizes $\calL_{\frac{1}{2}}$.
\end{rem}

We are now ready to prove the main result of this section. Recall that the Bianchi groups $\PSL_2(\calO_d)$ with $d \not\equiv -1 \pmod{8}$ are all commensurable with subgroups of $\Orth(4, 1; \bbZ)$, and one can choose the subgroup of $\PSL_2(\calO_d)$ contained in $\Orth(4,1; \bbZ)$ to be a congruence subgroup of each. We can then use the following to prove Theorem \ref{thm:Bianchi}.

\begin{prop}\label{prop:Cox4}
The congruence subgroup $\Gam(4)$ of level $4$ in $\Orth(4, 1; \bbZ)$ admits a congruence RFRS tower.
\end{prop}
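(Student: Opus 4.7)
The plan is to apply Lemma~\ref{lem:OurTower} with $\Gam = \Gam(4)$, $G = \Orth(q; \bbQ)$, and $p = 2$, closely mimicking the magic manifold argument of \S\ref{ssec:oneBianchi} but with the $(5,3)$-regular tree $\calT$ for $\Orth(q; \bbQ_2)$ playing the role of the $3$-regular tree for $\PGL_2(\bbQ_2)$. Before doing anything else, the key hypothesis of Lemma~\ref{lem:OurTower} must be verified: $H_1(\Gam(4); \bbZ)$ has no $2$-torsion. This is a genuinely new check, since it is not inherited from $\Gam(2)$, whose abelianization is $2$-elementary because $\Gam(2)$ is the right-angled Coxeter group described by Ratcliffe--Tschantz. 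I would establish it either by a direct RACG computation using the surjection $\Gam(2) \twoheadrightarrow \Gam(2)/\Gam(4)$ and a transfer argument, or by an explicit computer calculation.

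With that hypothesis in hand, first pick $g_1 \in \Orth(q; \bbQ)$ swapping the base vertex $x_0$ with one of its five neighbors in $\calT$; such $g_1$ exists by transitivity of $\Orth(q; \bbQ)$ on $\calT$ and can be chosen with entries having $2$-power denominators. A direct matrix computation, exactly parallel to the magic manifold calculation, shows
\[
\Gam(2^k) \le \Gam(4) \cap g_1 \Gam(4) g_1^{-1}
\]
for some explicit $k$, and then Lemma~\ref{lem:pGps} gives that the quotient $\Gam(4)/(\Gam(4) \cap g_1 \Gam(4) g_1^{-1})$ is an abelian $2$-group. Next, enumerate the vertices of $\calT$ as $v_0 = x_0, v_1, v_2, \ldots$ so that every $v_n$ with $n \ge 1$ is adjacent to some $v_i$ with $i < n$, and pick $g_n \in \Orth(q; \bbQ)$ with $g_n x_0 = v_n$ and $g_n g_i^{-1}$ swapping $v_i$ and $v_n$. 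Setting $\Del_n = g_n \Gam(4) g_n^{-1}$ and $\Gam_n = \bigcap_{j=0}^n \Del_j$, conjugating the above containment by $g_i$ yields that $\Del_i / (\Del_i \cap \Del_n)$ is an abelian $2$-group for each $n$, verifying condition (ii) of Lemma~\ref{lem:OurTower}.

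Cofinality follows because any $h \in \bigcap_n \Del_n$ must stabilize every vertex $v_n = g_n x_0$ of $\calT$, and hence lies in the kernel of the action of $\Orth(q; \bbQ_2)$ on $\calT$, which is finite; since $\Gam(4)$ is torsion-free (being a principal congruence subgroup of level $\ge 3$), this forces $h = 1$. Lemma~\ref{lem:OurTower} then produces a RFRS tower $\{\Gam_n\}$ for $\Gam(4)$. Finally, each $\Gam_n$ is a congruence subgroup of $\Orth(q; \bbZ)$: because the $g_i$ have $2$-power denominators, each $g_i \Gam(4) g_i^{-1}$ contains some $\Gam(2^{m_i})$, and a finite intersection of congruence subgroups of $\Orth(q; \bbZ)$ is again a congruence subgroup, so $\Gam_n \supseteq \Gam(2^{M_n})$ for some $M_n$. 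The main obstacle is the opening step, verifying that $H_1(\Gam(4); \bbZ)$ has no $2$-torsion; the rest of the argument is a faithful adaptation of the magic manifold template, with the matrix calculation in Step~2 being routine but requiring careful bookkeeping of denominators to pin down the congruence level $2^k$.
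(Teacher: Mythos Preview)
Your overall strategy matches the paper's: verify that $H_1(\Gam(4);\bbZ)$ has no $2$-torsion (the paper simply reports the Magma computation $\Gam(4)^{ab}\cong\bbZ^{55}$), then build the commensurating sequence $\{g_n\}$ from the action on the Bruhat--Tits tree $\calT$ and invoke Lemma~\ref{lem:OurTower}. However, one step is wrong as written and would not go through.

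You assert that $\Orth(q;\bbQ)$ acts transitively on $\calT$ and choose $g_1$ swapping $x_0$ with one of its five \emph{neighbors}. But $\calT$ is $(5,3)$-biregular (Lemmas~\ref{lem:Valence5} and~\ref{lem:Valence3}): the neighbors of the valence-$5$ vertex $x_0$ are all valence-$3$ vertices such as $x_{1/2}$, and these lie in a different $\Orth(q;\bbQ_2)$-orbit. No element of the group can carry $x_0$ to a vertex of different valence, so the $g_1$ you propose does not exist, and your enumeration ``every $v_n$ adjacent to some earlier $v_i$'' cannot be carried out inside a single orbit. The paper's remedy is to take $g_1\in\SO(q;\bbQ)$ swapping $x_0$ with $x_1$, a vertex at distance~$2$ in $\calT$ (fixing the midpoint $x_{1/2}$), and then to enumerate only the $\SO(q;\bbQ_2)$-orbit of $x_0$, moving in steps of length~$2$. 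The explicit matrix calculation yields $\Gam(16)\le\Gam(4)\cap g_1\Gam(4)g_1^{-1}$; note that $16=2^4$ is exactly the threshold $k\le 2j$ in Lemma~\ref{lem:pGps}(3) with $j=2$, so one gets an \emph{abelian} $2$-group quotient as required by Lemma~\ref{lem:OurTower}. Had the step size forced a deeper level $2^k$ with $k>4$, Lemma~\ref{lem:pGps} would only give a $2$-group, not an abelian one, and the argument would stall. Once you replace ``adjacent'' by ``distance~$2$ in the orbit of $x_0$'' throughout, the rest of your outline (cofinality via trivial global stabilizer, congruence property via finite intersections) is correct and coincides with the paper's proof.
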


\begin{pf}
Using the presentation for $\Orth(4,1;\bbZ)$ as a Coxeter group with diagram as in Figure \ref{fig:Cox4}, one can easily check using a computer algebra program like Magma \cite{Magma} that $\Gam(4)^{ab} \cong \bbZ^{55}$. In particular, it has no $2$-torsion.

Consider the matrix
\[
g_1 = \begin{pmatrix}
0 & 0 & 0 & 0 & 2 \\
0 & -1 & 0 & 0 & 0 \\
0 & 0 & -1 & 0 & 0 \\
0 & 0 & 0 & -1 & 0 \\
\frac{1}{2} & 0 & 0 & 0 & 0
\end{pmatrix} \in \SO(q; \bbQ) < \SO(q; \bbQ_2)
\]
that exchanges the vertices $x_0, x_1 \in \calT$ and fixes the intermediate vertex $x_{\frac{1}{2}}$.

We now set $\Gam_0 = \Gam(4)$ and $\Gam_1 = g_1 \Gam_0 g_1^{-1}$. We claim that
\[
\Gam(16) \le \Gam_0 \cap \Gam_1 \le \Gam_0.
\]
To see this, one first notices that
\begin{align*}
g_1 &\begin{pmatrix}
1 + 16 a_5 & -32 e_2 & -32 e_3 & -32 e_4 & 64 e_1 \\
-8 b_5 & 1 + 16 b_2 & 16 b_3 & 16 b_4 & -32 b_1 \\
-8 c_5 & 16 c_2 & 1 + 16 c_3 & 16 c_4 & -32 c_1 \\
-8 d_5 & 16 d_2 & 16 d_3 & 1 + 16 d_4 & -32 d_1 \\
4 a_1 & -8 a_2 & -8 a_3 & -8 a_4 & 1 + 16 a_1
\end{pmatrix} g_1^{-1} \\
= &\begin{pmatrix}
1 + 16 a_1 & 16 a_2 & 16 a_3 & 16 a_4 & 16 a_5 \\
16 b_1 & 1 + 16 b_2 & 16 b_3 & 16 b_4 & 16 b_5 \\
16 c_1 & 16 c_2 & 1 + 16 c_3 & 16 c_4 & 16 c_5 \\
16 d_5 & 16 d_2 & 16 d_3 & 1 + 16 d_4 & 16 d_5 \\
16 e_1 & 16 e_2 & 16 e_3 & 16 e_4 & 1 + 16 e_5
\end{pmatrix}
\end{align*}
for $a_1, \dots, e_5 \in \bbZ$. Also, since $g_1 \in \SO(q, \bbQ)$, the matrix on the right-hand side preserves $q$ if and only if the matrix being conjugated on the left-hand side does. This proves the claim.

Lemma \ref{lem:pGps} implies that $\Gam(4) / \Gam(16)$ is an abelian $2$-group, hence so is $\Gam_0 / \Gam_1$. We now define $g_n$ inductively as follows. Let $x_n$ be a vertex of $\calT$ in the $\SO(q; \bbQ_2)$-orbit of $x_0$ that is distance $2$ in $\calT$ from some vertex $x_i$ in $\{x_0, \dots, x_{n-1}\}$. Since $\SO(q; \bbQ)$ is dense in $\SO(q; \bbQ_2)$, there exists an $h_n$ in $\SO(q; \bbQ)$ so that $h_n(x_0) = x_i$ and $h_n(x_1) = x_n$. We define $g_n = h_n g_1 h_n^{-1}$.

Choose the sequence $\{x_n\}$ to exhaust the $\SO(q; \bbQ_2)$-orbit of $x_0$. Then it is easy to see that the sequence $\{g_n\}$ satisfies all the conditions of Lemma \ref{lem:OurTower}. In particular, if $\Del_n = g_n \Gam_0 g_n^{-1}$ and
\[
\Gam_n = \bigcap_{i = 0}^n \Del_n,
\]
then $\{\Gam_n\}$ is cofinal, and the elements $g_n$ satisfy the requisite assumptions by construction. Therefore there is a RFRS tower for $\Gam(4)$.
\end{pf}

\begin{cor}\label{cor:SOfiber}
The group $\SO(4,1;\mathbb{Z})$ has a congruence subgroup that is algebraically fibered.
\end{cor}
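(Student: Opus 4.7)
The plan is to combine Proposition \ref{prop:Cox4} with Kielak's theorem \cite[Thm.\ 5.3]{Kielak}, which asserts that an infinite finitely generated virtually RFRS group is virtually algebraically fibered if and only if its first $l^{(2)}$ Betti number vanishes, and whose proof produces the algebraically fibered subgroup inside any given RFRS tower (as noted in the introduction). So the strategy reduces to two checks: that the tower built in Proposition \ref{prop:Cox4} consists of congruence subgroups, and that $b_1^{(2)}(\Gam(4)) = 0$.

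For the first check, the proof of Proposition \ref{prop:Cox4} shows that the tower $\{\Gam_n\}$ satisfies $\Gam(4 \cdot 4^n) \le \Gam_n$ (iterating the inclusion $\Gam(16) \le \Gam_0 \cap \Gam_1$ coming from the Bruhat--Tits argument). In particular every $\Gam_n$ contains a principal congruence subgroup of $\Orth(4,1;\bbZ)$ and is therefore itself a congruence subgroup.

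For the second check, $\Gam(4)$ is a lattice in the rank-one simple Lie group $\SO(4,1)$, and by Borel's computation of $l^{(2)}$ Betti numbers of lattices in semisimple groups (equivalently, by Dodziuk's vanishing theorem for finite-volume real hyperbolic $n$-orbifolds), the $l^{(2)}$ Betti numbers of $\Gam(4)$ are concentrated in the middle dimension $n/2 = 2$. In particular $b_1^{(2)}(\Gam(4)) = 0$.

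Applying Kielak's theorem to the RFRS tower $\{\Gam_n\}$ of $\Gam(4)$ thus produces some $\Gam_{n_0}$ admitting a surjection $\phi\colon \Gam_{n_0} \to \bbZ$ with finitely generated kernel. By the first check, $\Gam_{n_0}$ is a congruence subgroup of $\Orth(4,1;\bbZ)$; intersecting with the index-two subgroup $\SO(4,1;\bbZ)$ gives a congruence subgroup $\Lam$ of $\SO(4,1;\bbZ)$ on which $\phi$ still has finitely generated kernel (namely $\ker(\phi) \cap \Lam$, which has finite index in $\ker(\phi)$) and nontrivial image (a finite-index subgroup of $\bbZ$, hence itself $\bbZ$). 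Hence $\Lam$ algebraically fibers, proving the corollary. The main conceptual obstacle is already resolved by Kielak's theorem; what remains is just the bookkeeping above ensuring the fibered subgroup lands inside $\SO(4,1;\bbZ)$ and inherits the congruence property from the tower.
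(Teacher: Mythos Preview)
Your proof is correct and follows the paper's approach: vanishing of $b_1^{(2)}$ for lattices in $\SO(4,1)$ (the paper cites \cite[Lem.~1]{Lott} rather than Borel/Dodziuk) together with Kielak's theorem applied to the congruence RFRS tower from Proposition~\ref{prop:Cox4}. The $\Orth$-to-$\SO$ passage is unnecessary since $\Gam(4)\le\SO(4,1;\bbZ)$ already, and your explicit bound $\Gam(4^{n+1})\le\Gam_n$ does not literally follow from the proof of Proposition~\ref{prop:Cox4} as written (the auxiliary elements $h_n$ there are arbitrary rational points and may introduce denominators at other primes), but the congruence conclusion you need is part of that proposition's statement and suffices.
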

\begin{proof}
Since $\SO(4,1;\mathbb{Z})$ has $b_1^{(2)}=0$ (see \cite[Lem.\ 1]{Lott}), a result of Dawid Kielak \cite[Thm.\ 5.3]{Kielak} implies that some level $2^k$ congruence subgroup is algebraically fibered.
\end{proof}

\begin{rem}
The proof of Proposition \ref{prop:Cox4} would work without alteration for the level $4$ congruence subgroup of $\SO(n,1; \bbZ)$ for any $n \ge 2$, as long as it has no $2$-torsion in its abelianization. This holds for $n = 2, 3, 4$, however Steven Tschantz computed that $H_1(\Gamma(4);\bbZ)\cong \bbZ^{256}\times \bbZ/2$ for $\Gamma=\SO(5,1;\bbZ)$. 
\end{rem}

\begin{rem}
For $n = 2, \dots, 7$, the congruence subgroup $\Gam(2)$ of level $2$ in $\Orth(n, 1; \bbZ)$ is a right-angled Coxeter group \cite[Thm.\ 7]{RatcliffeTschantz}. The commutator subgroup $\Gam(2)^{(1)}$ then has torsion-free abelianization \cite[\S4.5]{BV}. For $n \le 4$, $\Gam(2)^{(1)}$ equals $\Gam(4)$.  For $n > 4$, we have that $\Gam(2)^{(1)}$ is a proper finite index subgroup of $\Gam(4)$. It is possible that $H_1(\Gam(4))$ is torsion-free for $n>5$ in spite of Tschantz's computation for $n=5$.
\end{rem}

\begin{pf}[Proof of Theorem \ref{thm:Bianchi}]
For $d \not\equiv -1 \pmod{8}$, there is a finite index subgroup $\Del$ of $\PSL_2(\calO_d)$ that is isomorphic to a subgroup of the group $\Gam(4)$ in Proposition \ref{prop:Cox4}. For $d$ square-free, the quadratic form $q_d =\langle 1,1,1, -d\rangle$ is isotropic if and only if $d\not\equiv -1 \pmod{8}$; \cite[Thm.\ 6.2]{ALR} and the subsequent discussion. In this case, $\PO(q_d;\mathbb{Z})$ is commensurable with $\PSL_2(\calO_d)$ \cite[Thm.\ 2.3]{ALR}, hence one can embed $\PO(q_d;\bbZ)$ into $\PO(4,1;\bbZ)$ up to commensurability by \cite[Lem.\ 6.3]{ALR} and as in the proof of \cite[Lem.\ 4.6(i)]{ALR}. The proof that such a $\Del$ exists in fact produces a congruence subgroup of $\PSL_2(\calO_d)$. Intersecting this with the RFRS tower given in Proposition \ref{prop:Cox4} produces the desired RFRS tower for $\Del$. This proves the theorem.
\end{pf}

\begin{rem}
Methods analogous to work of Michelle Chu \cite{Chu} on effectively embedding subgroups of Bianchi groups in $\SO(6,1; \bbZ)$ could allow one to prove that the above Bianchi groups contain a congruence RFRS tower of uniformly bounded index.
\end{rem}

\begin{rem}
If the congruence subgroup of level $4$ in $\SO(6,1; \bbZ)$ has no $2$-torsion in its first homology, then Theorem \ref{thm:Bianchi} holds for all Bianchi groups. See \cite[Lem.\ 4.4]{ALR}. More generally, one only needs to find a prime $p$ so that the congruence subgroup of level $p$ in $\SO(6, 1; \bbZ)$ has no $p$-torsion in its abelianization, which seems likely but very difficult to verify computationally. If this holds, then all Bianchi groups contain a congruence RFRS tower and hence fiber on a congruence subgroup.
\end{rem}

\subsection{A complex hyperbolic example}\label{ssec:CH2}

Our example will come from a congruence cover of a Deligne--Mostow orbifold \cite{DeligneMostow}. We recall that for certain $(n+3)$-tuples $\mu$ of integers (called \emph{weights}) satisfying a condition called INT, Deligne and Mostow constructed lattices $\Gam_{\mu} \trianglelefteq \Gam_{\Sig \mu}$ in $\PU(n,1)$, where $\Sig$ is the symmetry group of the weights and $\Gam_{\Sig \mu} / \Gam_{\mu} \cong \Sig$. Let $\bbB^2$ denote complex hyperbolic $2$-space in what follows.

The example we consider here is $\mu = (2,2,2,2,2)$, hence $\Sig = S_5$. Following \cite{KirwanLeeWeintraub}, the underlying analytic space for the orbifold $\bbB^2 / \Gam_\mu$ is the blowup of the complex projective plane $\bbP^2$ at the four vertices of the complete quadrangle, and each divisor has orbifold weight $5$. See Figure \ref{fig:DM}. (Note that the convention in \cite{KirwanLeeWeintraub} is to divide the elements of $\mu$ by their gcd, so $\mu$ is listed as $(1,1,1,1,1)$.) Then $S_5$ acts on this blowup of $\bbP^2$ in a natural way with quotient the underlying analytic space for $\bbB^2 / \Gam_{\Sig \mu}$.
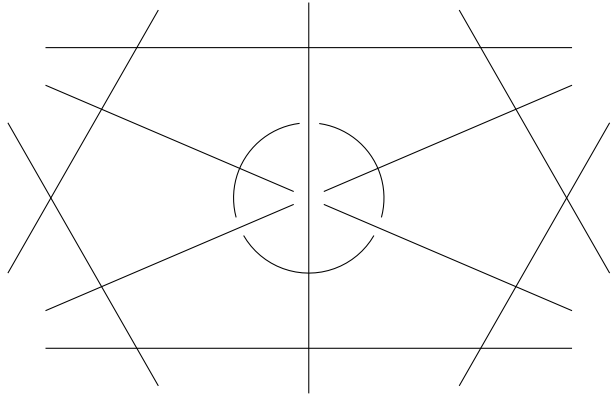
\begin{figure}
\centering
\begin{tikzpicture}[scale=2]
\draw (-1.75,1) -- (1.75,1);
\draw (-1.75,-1) -- (1.75,-1);
\draw (-1,-1.25) -- (-2,0.5);
\draw (1,-1.25) -- (2,0.5);
\draw (-1,1.25) -- (-2,-0.5);
\draw (1,1.25) -- (2,-0.5);
\draw (0,-1.3) -- (0,1.3);
\draw [domain=210:330] plot ({0.5*cos(\x)}, {0.5*sin(\x)});
\draw [domain=97:195] plot ({0.5*cos(\x)}, {0.5*sin(\x)});
\draw [domain=-15:82] plot ({0.5*cos(\x)}, {0.5*sin(\x)});
\draw (0.1, 0.04285714285) -- (1.75,0.75);
\draw (-0.1, -0.04285714285) -- (-1.75,-0.75);
\draw (-0.1, 0.04285714285) -- (-1.75,0.75);
\draw (0.1, -0.04285714285) -- (1.75,-0.75);
\end{tikzpicture}
\caption{The orbifold $\bbB^2 / \Gam_\mu$. Each line or circle represents a $\bbP^1$ in the orbifold locus, and each has orbifold weight $5$. Local orbifold groups at intersection points are all $(\bbZ / 5)^2$.}\label{fig:DM}
\end{figure}

It is known that these lattices are arithmetic. More specifically, let $E = \bbQ(\zeta_5)$, where $\zeta_5$ is a primitive $5^{th}$ root of unity, and $F = \bbQ(\al)$ with $\al^2 = 5$ be its totally real quadratic subfield. Define $\phi = \frac{1 - \al}{2}$ and consider the hermitian form on $E^3$ with matrix
\[
h = \begin{pmatrix} \phi & 1 & 0 \\ 1 & \phi  & 1 \\ 0 & 1 & \phi \end{pmatrix}.
\]
Then $h$ has signature $(2,1)$ at one complex place of $E$ and signature $(3,0)$ at the other complex place. It follows that $\PU(h, \calO_E)$ is a cocompact arithmetic lattice in $\PU(2,1)$, where $\calO_E = \bbZ[\zeta_5]$ is the ring of integers of $E$. Let $\pi = \zeta_5-1$ and $\frakp_5 = \pi \calO_E$ be the unique prime ideal of $\calO_E$ dividing $5 \calO_E$. Note that $\calO_E / \frakp_5 \cong \bbF_5$, $\frakp_5^2 = \al \calO_E$, and $\frakp_5^4 = 5 \calO_E$. We then have the following.

\begin{prop}\label{prop:5Congruence}
With notation as above, $\Gam_{\Sig \mu} \cong \PU(h, \calO_E)$ and $\Gam_\mu$ is the congruence subgroup $\Gam_{\Sig \mu}(\frakp_5)$ of level $\frakp_5$ in $\Gam_{\Sig \mu}$.
\end{prop}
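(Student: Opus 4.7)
The first task is to identify $\Gam_{\Sig\mu}$ with $\PU(h, \calO_E)$. The plan is to use the explicit complex-reflection generators from Mostow's hypergeometric monodromy construction: for $\mu = (2,2,2,2,2)$, one obtains a small set of order-$5$ complex reflections together with matrices realizing the $S_5$-permutations of marked points, and one checks directly in a suitable basis that they preserve $h$ and have entries in $\calO_E$, giving the inclusion $\Gam_{\Sig\mu} \subseteq \PU(h, \calO_E)$. Equality is then confirmed either by a covolume comparison via Prasad's volume formula against the covolume of $\Gam_{\Sig\mu}$ from \cite{DeligneMostow}, or by invoking existing explicit identifications of Deligne--Mostow lattices with specific unitary groups in the literature.

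To identify $\Gam_\mu$ with the $\frakp_5$-congruence subgroup, the key observation is that $E/F$ is ramified at the prime below $\frakp_5$ (since $\frakp_5^2 = \al \calO_E$), so the nontrivial Galois involution of $E/F$ acts as the identity on $\calO_E/\frakp_5 \cong \bbF_5$. Reducing $h$ modulo $\frakp_5$ (with $\phi \equiv 3 \pmod{\frakp_5}$) therefore produces a nondegenerate symmetric bilinear form on $\bbF_5^3$, so $\U(h) \otimes \bbF_5 \cong \Orth_3(\bbF_5)$; modding out by scalars (which reduce to $\{\pm 1\}$ because $\zeta_5 \equiv 1$ mod $\frakp_5$) and applying the exceptional isomorphism $\PO_3(\bbF_5) \cong \PGL_2(\bbF_5) \cong S_5$ yields a homomorphism $\rho \colon \PU(h, \calO_E) \to S_5$ with $\ker \rho = \Gam_{\Sig\mu}(\frakp_5)$. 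Strong approximation for the simply connected $\SU(h)$ (non-compact at exactly one real place of $F$) makes $\rho$ surjective, so $[\Gam_{\Sig\mu} : \Gam_{\Sig\mu}(\frakp_5)] = 120$.

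Since $\Gam_\mu$ and $\Gam_{\Sig\mu}(\frakp_5)$ are both normal of index $120$ in $\Gam_{\Sig\mu}$, to conclude equality it suffices to check containment on generators. Each complex-reflection generator of $\Gam_\mu$ has eigenvalues $1, 1, \zeta_5$ with axis defined over $\calO_E$, so it has the form $\Id + (\zeta_5 - 1) N$ with $N \in \M_3(\calO_E)$, and hence lies in $\Gam_{\Sig\mu}(\frakp_5) = \ker \rho$. The main obstacle is the first step: making Mostow's representation fully explicit in the coordinates where $h$ takes the given tridiagonal form, and then carrying out the covolume or maximality verification needed to upgrade the inclusion into the equality $\Gam_{\Sig\mu} = \PU(h, \calO_E)$. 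Once this setup is in place, the reduction-mod-$\frakp_5$ calculation and the generator-level congruence check are essentially routine.
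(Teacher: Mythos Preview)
Your proposal follows essentially the same route as the paper for the first assertion: the inclusion $\Gam_{\Sig\mu}\le\PU(h,\calO_E)$ comes from identifying $h$ as the hermitian form attached to $\mu$ (the paper cites Looijenga for this), and equality is obtained by a covolume/orbifold Euler characteristic comparison (the paper cites tabulated values in McMullen and Prasad--Yeung rather than running Prasad's formula from scratch, but the idea is the same).

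For the second assertion the approaches diverge. The paper establishes, exactly as you do, that reduction mod $\frakp_5$ turns $h$ into a nondegenerate quadratic form and yields a surjection onto $\PO(3,5)\cong S_5$; but instead of checking that the reflection generators of $\Gam_\mu$ die under this map, it feeds a known presentation of $\Gam_{\Sig\mu}$ into Magma and verifies that there is a \emph{unique} surjection $\Gam_{\Sig\mu}\twoheadrightarrow S_5$, which forces the two kernels to coincide. Your generator-by-generator argument is precisely what the paper sketches in the Remark immediately following the proof as a computer-free alternative. One point in your outline needs tightening: the implication ``axis defined over $\calO_E$, hence $R=\Id+(\zeta_5-1)N$ with $N\in\M_3(\calO_E)$'' requires the stronger input that the reflection root $v$ has $h(v,v)$ a $\frakp_5$-adic unit, i.e.\ that the $\zeta_5$-eigenline reduces to a \emph{nondegenerate} line mod $\frakp_5$. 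Merely having $v\in\calO_E^3$ does not suffice, since the reflection formula divides by $h(v,v)$; without this, $\bar R$ could a priori be a nontrivial unipotent in $\PO(3,5)$. This is exactly the verification the paper's Remark isolates. Once it is checked for the specific Mostow roots, your argument goes through and has the advantage of avoiding the computer.
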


\begin{pf}
It follows from Deligne and Mostow's work that $h$ is the hermitian form associated with $\mu$, hence $\Gam_{\Sig \mu} \le \PU(h, \calO_E)$. For example, see \cite[Prop.\ 4.6]{Looijenga} (technically, this gives a scalar multiple of $h$, but similar hermitian forms have the same unitary group). To show that the two are equal, it suffices to know that $\bbB^2 / \Gam_{\Sig\mu}$ and $\bbB^2 / \PU(h, \calO_E)$ have the same orbifold Euler characteristic. See \cite[Table 1]{McMullen} for $\Gam_{\Sig \mu}$ and \cite[\S 8.2]{PrasadYeung} for $\PU(h, \calO_E)$ (noting that $\mu$ in \cite{PrasadYeung} is one-third the orbifold Euler characteristic and that our arithmetic lattice comes from their case $\calC_1$). This proves the first part of the proposition.

Note that $h$ becomes a nondegenerate \emph{quadratic} form modulo $\frakp_5$, hence reduction modulo $\frakp_5$ is a surjective homomorphism $\Gam_{\Sig \mu} \to \PO(3,5) \cong S_5$. We also have the surjective homomorphism $\Gam_{\Sig \mu} \to S_5$ with kernel $\Gam_\mu$ defined by the automorphisms of $\mu$. We want to know that these homomorphisms are the same. However, using one of the known presentations for $\Gam_{\Sig\mu}$ (e.g., see \cite[\S 7]{CSmatrix} or \cite{DPP}) one easily checks with Magma \cite{Magma} that there is a unique homomorphism from $\Gam_{\Sig\mu}$ onto $\PO(3,5)$, hence $\Gam_\mu$ must be the congruence subgroup of level $\frakp_5$, and this completes the proof of the proposition.
\end{pf}

\begin{rem}
One can likely show without appealing to the computer that $\Gam_\mu = \Gam_{\Sig \mu}(\frakp_5)$ by showing that the order $5$ reflection generators of $\Gam_\mu$ associated with allowing two weights to coalesce all have $\zeta_5$-eigenline that reduce to a nondegenerate line modulo $\frakp_5$. Since $\bbF_5$ has no nontrivial $5^{th}$ roots of unity, each generator is trivial modulo $\frakp_5$, hence $\Gam_\mu \le \Gam_{\Sig \mu}(\frakp_5)$. The two groups have the same index in $\Gam_{\Sig \mu}$, hence they are the same.
\end{rem}

\noindent
The lattice of interest to us is supplied by the following lemma.

\begin{lem}\label{lem:OurCHgroup}
With notation as above, the congruence subgroup of level $\frakp_5^2 = \al \calO_E$ in $\Gam_{\Sig \mu}$ is the commutator subgroup of $\Gam_\mu$ and its abelianization is isomorphic to $\bbZ^{60}$.
\end{lem}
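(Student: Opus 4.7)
The plan is to combine Lemma \ref{lem:pGps} with explicit computations in a computer algebra system based on a known finite presentation of $\Gam_{\Sig\mu}$.

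By Lemma \ref{lem:pGps}(1), since $\Gam_\mu = \Gam_{\Sig\mu}(\frakp_5)$, the quotient $\Gam_\mu / \Gam_{\Sig\mu}(\frakp_5^2)$ is an elementary abelian $5$-group, which gives the inclusion $[\Gam_\mu, \Gam_\mu] \le \Gam_{\Sig\mu}(\frakp_5^2)$ for free. For the reverse, note that the induced surjection
\[
\Gam_\mu^{ab} \twoheadrightarrow \Gam_\mu / \Gam_{\Sig\mu}(\frakp_5^2)
\]
is an isomorphism precisely when the two finite $5$-groups have the same order, so it suffices to show $\Gam_\mu^{ab}$ is finite and pin down its size.

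Starting from one of the presentations of $\Gam_{\Sig\mu}$ cited in Proposition \ref{prop:5Congruence} (from \cite{CSmatrix} or \cite{DPP}), I would use the Reidemeister--Schreier procedure in Magma \cite{Magma} against the homomorphism $\Gam_{\Sig\mu} \to S_5$ to produce a presentation of the index-$120$ subgroup $\Gam_\mu$. Magma's abelian quotient routine then computes $\Gam_\mu^{ab}$ directly, and I would expect to see a finite elementary abelian $5$-group. To match its order with $|\Gam_\mu / \Gam_{\Sig\mu}(\frakp_5^2)|$, I would reduce the explicit matrix generators of $\Gam_{\Sig\mu}$ modulo $\frakp_5^2$ and compute the image of $\Gam_\mu$ in $\PU(h, \calO_E / \frakp_5^2)$, which coincides with $\Gam_\mu / \Gam_{\Sig\mu}(\frakp_5^2)$. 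Equality of these two orders then establishes the first claim.

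For the abelianization statement, with $\Gam_{\Sig\mu}(\frakp_5^2) = [\Gam_\mu, \Gam_\mu]$ in hand, I would apply Reidemeister--Schreier once more, now to the presentation of $\Gam_\mu$ against its abelianization map, to produce a presentation for the commutator subgroup $\Gam_{\Sig\mu}(\frakp_5^2)$. Abelianizing should then yield $\bbZ^{60}$. The principal obstacle is computational cost: the total index $[\Gam_{\Sig\mu} : \Gam_{\Sig\mu}(\frakp_5^2)] = 120 \cdot |\Gam_\mu / \Gam_{\Sig\mu}(\frakp_5^2)|$ could be large (up to roughly $120 \cdot 5^8$ if the congruence kernel absorbs the full Lie algebra), which will stress Reidemeister--Schreier. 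Performing the descent in stages (first to $\Gam_\mu$, simplifying the resulting presentation, and only then to the commutator subgroup) should keep the computation tractable, as similar manipulations of Deligne--Mostow presentations have been carried out successfully in related work.
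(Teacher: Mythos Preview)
Your proposal is correct and follows essentially the same route as the paper: use Lemma~\ref{lem:pGps} to get $[\Gam_\mu,\Gam_\mu]\le\Gam_{\Sig\mu}(\frakp_5^2)$, compute $\Gam_\mu^{ab}$ from a presentation, compare its order with that of the congruence quotient to force equality, and then abelianize the commutator subgroup in Magma. The only tactical difference is that the paper determines $|\Gam_\mu/\Gam_{\Sig\mu}(\frakp_5^2)|=5^5$ by a short hand calculation in $\PU(h,\calO_E/\frakp_5^2)$ (writing $\calO_E/\frakp_5^2\cong\bbF_5[\ep]$ and reducing the unitary condition to three independent linear equations over $\bbF_5$), rather than by reducing matrix generators on the machine; this also dispels your size concern, since the total index is $120\cdot 5^5$, well within range.
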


\begin{pf}
We have that $\Gam_\mu = \Gam_{\Sig\mu}(\frakp_5)$ is the congruence subgroup of $\Gam_{\Sig\mu}$ of level $\frakp_5$ by Proposition \ref{prop:5Congruence}. A direct calculation shows that there are exactly $5^5$ elements of $\PU(h, \calO_E / \frakp_5^2)$ that are congruent to the identity modulo $\frakp_5$. To see this, write $\calO_E / \frakp_5^2$ as $\bbF_5[\ep]$ for $\ep^5 = 1$ and $(\ep-1)^2 = 0$, where the Galois involution of $\calO_E$ descends to the automorphism of $\calO_E / \frakp_5^2$ generated by $\ep \mapsto -\ep$. The ideal $(\ep)$ is the kernel of the natural reduction map from $\calO_E / \frakp_5^2$ to $\bbF_5$, and this ideal consists of the multiples of $\ep$ by $0,\dots, 4 \in \bbF_5$.

The image of $h$ in $\SL_3(\calO_E / \frakp_5^2)$ has matrix
\[
\begin{pmatrix} 3 & 1 & 0 \\ 1 & 3 & 1 \\ 0 & 1 & 3 \end{pmatrix}.
\]
One then calculates that lifts to $\GL_3(\calO_E / \frakp_5^2)$ of elements of $\PU(h, \calO_E / \frakp_5^2)$ that are congruent to the identity modulo $(\ep)$ are matrices of the form
\[
\begin{pmatrix}
1+ \ep x_1 & \ep x_2 & \ep x_3 \\
\ep y_1 & 1 + \ep y_2 & \ep y_3 \\
\ep z_1 & \ep z_2 & 1 + \ep z_3
\end{pmatrix}
\]
with $x_1, \dots, z_3 \in \{0, \dots, 4\} \subset \bbF_5 \subset \calO_E / \frakp_5^2$ satisfying the three linearly independent equations:
\begin{align*}
x_1 - 3 x_2 + 3 y_1 - y_2 + z_1 &= \\
3 x_3 - y_1 + y_3 - 3 z_1 &= \\
x_3 - y_2 + 3 y_3 - 3 z_2 + z_3 &= 0
\end{align*}
There are $5^6$ such matrices, and exactly $5$ of them are scalar matrices. Therefore projectivization leaves us with $5^5$ possibilities.

Lemma \ref{lem:pGps} then implies that $\Gam_{\Sig \mu}(\frakp_5) / \Gam_{\Sig\mu}(\frakp_5^2)$ is isomorphic to $(\bbZ / 5)^5$. Computing the abelianization of $\Gam_\mu$ (either from a presentation or geometrically from the orbifold), one sees that $\Gam_\mu^{ab} \cong (\bbZ / 5)^5$, hence $\Gam_{\Sig\mu}(\frakp_5^2)$ is the commutator subgroup of $\Gam_\mu$. One then computes the abelianization of the commutator subgroup of $\Gam_\mu$ in Magma \cite{Magma} to complete the proof of the lemma.
\end{pf}

\begin{rem}
We note the following analogy between $\Orth(4,1; \bbZ)$ and $\Gam_{\Sig\mu}$. Recall that $\Orth(4,1; \bbZ)$ is a Coxeter group whose congruence subgroup of level $2$ is a right-angled Coxeter group, and the commutator subgroup of the right-angled group is the congruence subgroup of level $4$ in $\Orth(4,1;\bbZ)$. Analogously, $\Gam_{\Sig\mu}$ is a complex hyperbolic reflection group whose congruence subgroup of level $\frakp_5$ is the complex hyperbolic reflection group $\Gam_\mu$, and the commutator subgroup of $\Gam_\mu$ is the congruence subgroup of $\Gam_{\Sig\mu}$ of level $\frakp_5^2$.
\end{rem}

We now describe the building used to apply our methods to prove that $\Gam_{\Sig\mu}(\frakp_5^2)$ admits a congruence RFRS tower. Let $E_5$ be the completion of $E$ with respect to the valuation associated with $\frakp_5$. Then $E_5 = \bbQ_5(\zeta_5)$ is a degree four totally ramified extension of $\bbQ_5$ with intermediate quadratic subfield $F_5 = \bbQ_5(\al)$ and $\pi$ is a uniformizer for $E_5$. The group $\SU(h, E_3)$ is the unique special unitary group in $3$ variables with respect to $E_5 / F_5$, and the associated Bruhat--Tits building is a tree \cite[\S2.10]{Tits}.

As in \S\ref{ssec:moreBianchi}, a change of coordinates will be convenient for describing this building. One can find a change of coordinates with entries in the ring of integers $\calO_5$ of $E_5$ so that $h$ has matrix $-h_0$ for
\[
h_0 = \begin{pmatrix} 0 & 0 & 1 \\ 0 & 1 & 0 \\ 1 & 0 & 0 \end{pmatrix}.
\]
For example,
\[
c = \begin{pmatrix}
1 & -\de^{-1} & \frac{-1+\al}{8} \\
-1 + \de & \de^{-1} & \frac{(1-\al)(1+\de)}{8} \\
-1-\al+\ep & 0 & -\frac{2+\de}{4}
\end{pmatrix}
\]
suffices, where $\de$ is a square root of $1+\al$ and $\ep$ is a square root of $4 + 2 \al$. Critically, $\de, \ep \in \calO_5^*$ (one can see this by showing that the prime $\al \calO_F$ of $\calO_F$ dividing $5$ splits in both $F(\de)$ and $F(\ep)$, and $\de,\ep$ are invertible in $\calO_5^*$ since $1+\al$ and $4+2 \al$ have norm $-4$). Since this conjugation is integral over $E_5$ with determinant $1$, and because similar hermitian forms have the same unitary group, we have that $\Gam_{\Sig\mu}$ is isomorphic to the intersection of the $F$-points of $\SU(h_0)$ with $\SU(h_0, \calO_5)$.

Following \cite[\S2.10]{Tits}, the building for $\SU(h_0, E_5)$ has vertices the set of additive norms $\phi$ on $E_5^3$ so that
\[
\nu(h_0(x,y)) \ge \phi(x) + \phi(y)
\]
for all pairs $x,y \in E_5^3$, where $\nu$ is the extension to $E_5$ of the normalized valuation on $F_5$ (i.e., with value group $\frac{1}{2}\bbZ$). There is an obvious action of $\SU(h_0)$ on the set of norms, and the norm stabilized by $\SU(h_0, \calO_5)$ is the vertex $v_0$ associated with the norm
\[
\phi_0(x_1, x_2, x_3) = \inf\{\nu(x_j)\ :\ 1 \le j \le 3 \}.
\]
In particular, $\Gam_{\Sig\mu}$ stabilizes this vertex.

The matrix
\[
g_0 = \begin{pmatrix} 0 & 0 & \pi \\ 0 & \zeta_5^4 & 0 \\ \conj{\pi}^{-1} & 0 & 0 \end{pmatrix} \in \SU(h)
\]
(where $\conj{\pi}$ is the conjugate of $\pi$ for the $\Gal(E_5 / F_5)$-action) acts on the tree by sending $v_0$ to the vertex $v_1$ associated with the norm
\[
\phi_1(x_1, x_2, x_3) = \inf\left\{\nu(x_1) - \frac{1}{2}, \nu(x_2), \nu(x_3) + \frac{1}{2}\right\},
\]
since $\nu(\pi) = \frac{1}{2}$. It also fixes the intermediate vertex associated with
\[
\psi(x_1, x_2, x_3) = \inf\left\{\nu(x_1) - \frac{1}{4}, \nu(x_2), \nu(x_3) + \frac{1}{4}\right\}.
\]
Then one checks by a direct matrix computation that $g_0 \Gam_{\Sig\mu}(\frakp_5^2) g_0^{-1} \cap \Gam_{\Sig\mu}(\frakp_5^2)$ contains $\Gam(\frakp_5^4)$. Indeed, note that $\frakp_5^4 = 5 \calO_E$ and if
\[
\gam = \left(\begin{smallmatrix}
1 + 5 c_3 & -(\zeta_5^2 + 2 \zeta_5 + 1) \pi^5 c_2 & -(\zeta_5^3 + 2 \zeta_5^2 + \zeta_5) \pi^6 c_1 \\
(\zeta_5^3 + \zeta_5^2 - 1) \pi^3 b_3 & 1 + 5 b_2 & -(\zeta_5^3 + 2 \zeta_5^2 + 2 \zeta_5 + 1) \pi^5 b_1 \\
(\zeta_5^3 + 2 \zeta_5^2 + 2 \zeta_5 + 1) \pi^2 a_3 & (\zeta_5^3 + 2 \zeta_5^2 + \zeta_5) \pi^3 a_2 & 1 + 5 a_1
\end{smallmatrix}\right),
\]
then $\gam \in \Gam_{\Sig \mu}(\frakp_5^2)$ and
\[
g_0 \gam g_0^{-1} = \begin{pmatrix}
1 + 5 a_1 & 5 a_2 & 5 a_3 \\
5 b_1 & 1 + 5 b_2 & 5 b_3 \\
5 c_1 & 5 c_2 & 1 + 5 c_3
\end{pmatrix} \in \Gam_{\Sig \mu}(\frakp_5^4).
\]
From here, one applies the techniques developed in the previous examples to prove Theorem \ref{thm:Kahler}.

\begin{rem}
Analogous to our realization of the building for $\PO(q; \bbQ_2)$ inside the building for $\PGL_5(\bbQ_2)$ in \S\ref{ssec:moreBianchi}, we can realize the building for $\PU(h, E_5)$ inside the building for $\PGL_3(E_5)$ by taking the vertex associated with an additive norm $\phi$ to be the homothety class of the $\calO_E$ lattice on which $\phi$ takes nonnegative values. In the above notation and recalling that $\nu$ is normalized to have value group $\frac{1}{2} \bbZ$, this gives:
\begin{align*}
\phi_0 &\mapsto [\langle e_1, e_2, e_3 \rangle] \\
\phi_1 &\mapsto [\langle \pi e_1, e_2, \pi^{-1} e_3 \rangle] \\
\psi &\mapsto [\langle \pi e_1, e_2, e_3 \rangle]
\end{align*}
One can proceed as in \S\ref{ssec:moreBianchi} to compute the fundamental apartment associated with the standard $\bbQ_5$-split torus and compute the valence of each vertex of the tree.
\end{rem}

\medskip

Combining Theorem \ref{thm:Kahler} and \cite[Thm.\ E]{FV}, one obtains a new proof of the following (which was known by \cite[Thm.\ 3]{StoverBetti} without knowing which congruence tower contains the fibration).

\begin{cor}\label{cor:CHfiber}
The group $\Gam_{\Sig\mu}$ virtually algebraically fibers on a congruence subgroup of level dividing $5$.
\end{cor}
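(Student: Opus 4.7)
The plan is to combine Theorem~\ref{thm:Kahler} with the K\"ahler dichotomy of Friedl and Vidussi \cite[Thm.\ E]{FV}. The construction in \S\ref{ssec:CH2} exhibits, starting from $\Gam_0 = \Gam_{\Sig\mu}(\frakp_5^2)$ and the element $g_0 \in \SU(h, E)$, an explicit RFRS tower $\{\Gam_n\}$ for $\Gam_0$, each term of which has the form $\Gam_n = \bigcap_{i = 0}^n g_i \Gam_0 g_i^{-1}$ with $g_i \in \SU(h, E)$. The key observation is that every such $\Gam_n$ contains a principal congruence subgroup $\Gam_{\Sig\mu}(\frakp_5^{k_n})$ — the matrix computation for $g_0$ showing $g_0 \Gam_{\Sig\mu}(\frakp_5^2) g_0^{-1} \cap \Gam_{\Sig\mu}(\frakp_5^2) \supseteq \Gam_{\Sig\mu}(\frakp_5^4)$ is the prototype — so every $\Gam_n$ is itself a congruence subgroup of $\Gam_{\Sig\mu}$ whose level is a power of $\frakp_5$.

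By Theorem~\ref{thm:Kahler}, $\Gam_0$ is a torsion-free cocompact lattice in $\PU(2,1)$, so the quotient $\bbB^2 / \Gam_0$ is a smooth compact complex hyperbolic surface and $\Gam_0$ is a K\"ahler group. It is not a virtual surface group because it has cohomological dimension $4$, while surface groups have cohomological dimension at most $2$. Friedl and Vidussi's Theorem~E then applies: a RFRS K\"ahler group that is not a virtual surface group is virtually algebraically fibered, and their proof is constructive, identifying a specific term $\Gam_n$ of any given RFRS sequence admitting a homomorphism onto $\bbZ$ with finitely generated kernel. Applied to the tower above, this produces a congruence subgroup of $\Gam_{\Sig\mu}$ of $\frakp_5$-power level that algebraically fibers, which is the content of the corollary.

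The one subtle point is verifying that the fibration produced by \cite[Thm.\ E]{FV} actually lives at a finite stage of the prescribed RFRS tower, rather than in some abstract virtual subgroup; this is immediate from inspection of their argument, which promotes a nonzero class in $H^1(\Gam_n; \bbR)$ at a finite level to an algebraic fibration using Delzant's classification of BNS invariants for K\"ahler groups. The main conceptual input is therefore the first step — that the proof of Theorem~\ref{thm:Kahler} supplies a RFRS tower consisting entirely of congruence subgroups — which has already been established in \S\ref{ssec:CH2}, and so the corollary follows by straightforward assembly of the two ingredients.
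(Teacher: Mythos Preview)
Your proposal is correct and follows essentially the same approach as the paper: the paper's justification is simply the single sentence ``Combining Theorem~\ref{thm:Kahler} and \cite[Thm.\ E]{FV}, one obtains a new proof of the following,'' and you have fleshed out exactly that combination, supplying the details (that the tower from \S\ref{ssec:CH2} consists of $\frakp_5$-power congruence subgroups, that $\Gam_0$ is K\"ahler and not virtually a surface group, and that the Friedl--Vidussi argument yields a fibration at a finite stage of the given tower).
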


\begin{rem}\label{rem:NotNonuniform}
We note that nonuniform lattices in $\PU(n,1)$ cannot be virtually RFRS for $n \ge 2$. This is because their cusp subgroups are virtually two-step nilpotent groups, but two-step nilpotent groups are not virtually RFRS and being RFRS descends to subgroups. However, if $\Gam < \PU(n,1)$ was a nonuniform arithmetic lattice contained in the congruence subgroup of level $\frakp$ for which $\Gam^{ab}$ contains no $p$-torsion, where $\frakp$ is a prime of residue characteristic $p$, then the methods of this paper would produce a congruence RFRS tower, which is impossible. In particular, we conclude that $\Gam^{ab}$ must have $p$-torsion.

One way to find this $p$-torsion is as follows. Since $\Gam$ is contained in a congruence subgroup, away from some small exceptions the associated complex hyperbolic manifold $\bbB^n / \Gam$ admits a \emph{smooth toroidal compactification} in the sense of \cite{AMRT}. One often sees that the cusp cross-sections of $\bbB^n / \Gam$ are nil-manifolds with $p$-torsion in their homology. For example, for $\Gam(\frakp)$ the center of any peripheral subgroup generates $p$-torsion in the homology of the associated nil-manifold. Careful consideration of the standard Mayer--Vietoris sequence for the toroidal compactification (cf.\ \cite[\S4]{DiCerboStover}) allows one to then conclude that this $p$-torsion in the homology of the cusp cross-section must in fact induce $p$-torsion in the homology of $\bbB^n / \Gam$.

In particular, peripheral subgroups of $\Gam$ can force $\Gam^{ab}$ to have $p$-torsion when $\Gam$ is contained in the congruence subgroup of level $\frakp$. Thus the obstruction to $\Gam$ containing a RFRS tower is also an obstruction to $\Gam^{ab}$ having no $p$-torsion.
\end{rem}

\section{Conclusion}\label{sec:Conclusion}

There are many natural questions that arise from the results and methods of this paper.

\medskip

We recall that a group $\Gam$ is said to \emph{virtually algebraically fiber} if it has a homomorphism onto $\bbZ$ with finitely generated kernel. This is an algebraic generalization of the well-known Stallings criterion for a compact $3$-manifold to fiber over $S^1$.

\begin{qtn}
Which commensurability classes of rank $1$ arithmetic lattices contain a congruence subgroup that is algebraically fibered?
\end{qtn}

This question was originally posed by Baker and Reid in personal communication.  A $4$-dimensional lattice that virtually algebraically fibers was given in \cite[Rem.\ 5.3]{JNW}, though we do not know if the example fibers on a congruence subgroup. An obvious obstruction to having a virtual algebraic fibration on a congruence subgroup is if every congruence lattice in the commensurability class has trivial $1^{st}$ betti number. For example, Bergeron and Clozel proved that the first betti number vanishes for all congruence arithmetic lattices in $\PO(7,1)$ defined via triality \cite[Thm.\ 1.1]{BergeronClozel3}. For all other arithmetic lattices in $\PO(n,1)$, $n\neq 3$, one can find a congruence subgroup with nontrivial $1^{st}$ betti number \cite[Cor.\ 1.8]{BergeronClozel2} (the $n=3$ case is open - see \cite{Schwermer} for a discussion of what is known). There are also classes of arithmetic lattices in $\PU(n,1)$ where each kind of behavior occurs. See \cite{BergeronClozel1} for more on what is known for cohomological vanishing for congruence arithmetic lattices in $\PU(n,1)$ and \cite{StoverBetti} for more on algebraic fibrations in that setting.

\begin{qtn}
Given a congruence arithmetic group, how often does a principal congruence subgroup at a prime ideal $\mathfrak{p}$ have no $p$-torsion in $H_1$, where $\mathfrak{p}|p$? Is there some arithmetic significance to this phenomenon?
\end{qtn}

See Appendix A by \c{S}eng\"un for data indicating that vanishing of $p$-torsion is quite frequent for congruence subgroups of Bianchi groups, but by no means ubiquitous.

\begin{qtn}
For each $n > 1$, is there a prime $p$ so that the congruence subgroup $\Gamma(p)$ of level $p$ in $\SO(n,1;\mathbb{Z})$ has no $p$-torsion in its abelianization?
\end{qtn}

If true, this would give a positive answer to Question 1 for arithmetic hyperbolic groups of simplest type (i.e., those defined by a quadratic form), since one can embed a congruence subgroup of these groups into $\SO(n,1;\mathbb{Z})$ by restriction of scalars.

\begin{qtn}
When does $\Gamma(\mathfrak{p}^n)$, $n\in \mathbb{N}$, form a RFRS sequence? Is the sequence RFRS whenever it is at the first stage, i.e., $\Gamma(\mathfrak{p})^{(1)}_r \le \Gamma(\mathfrak{p}^2)$?
\end{qtn}

This is  roughly a version of another question posed by Baker and Reid in private communication. In this paper, we only show that an interlacing of this sequence is RFRS if $H_1(\Gamma(\mathfrak{p}); \bbZ)$ has no $p$-torsion. Also recall Remark \ref{rem:NotNonuniform}.

\begin{qtn}
When does this strategy work for nonarithmetic hyperbolic lattices? When is there a congruence subgroup that fibers, or a congruence RFRS tower?
\end{qtn}

Note that any lattice in $\PO(n,1)$, $n \ge 3$, or $\PU(n,1)$, $n \ge 2$, is a subgroup of an $S$-arithmetic group by local rigidity. Indeed, the lattice can be embedded in $\GL_n(K)$ for $K$ a number field, and hence lies in $\GL_n(\calO)$ for $\calO$ some finitely generated subring of $K$. Therefore, the notion of congruence subgroup makes sense when one avoids the primes in $S$, where $S$ denotes the primes that are inverted in $\calO$.

\pagebreak

\section*{Appendix A\\ Torsion in the homology of principal congruence subgroups of Bianchi groups}

\noindent
{\large
Mehmet Haluk \c{S}eng\"un}\\
University of Sheffield\\
\texttt{m.sengun@sheffield.ac.uk}

\renewcommand{\thesection}{\Alph{section}}
\setcounter {section}{1}

\subsection{Introduction}
Let $K$ be an imaginary quadratic field with ring of integers $\bbZ_K$. An ideal $\fraka$ of $\bbZ_K$ determines a finite-index normal subgroup $\Gamma(J)$ of the Bianchi group $\SL_2(\bbZ_K)$, called the {\em principal congruence subgroup of level $\fraka$}. If $\frakp$ is a prime idea of $\bbZ_K$ over the rational prime $p$, the question of whether the abelian group $H_1(\Gamma(\frakp),\bbZ)$ has $p$-torsion arises naturally in the current work of Ian Agol and Matthew Stover. In this appendix, we try to gain insight into this question by producing numerical data.

\subsection{Methodology}
Let $K$ be one of the five imaginary quadratic fields for which $\bbZ_K$ is Euclidean, namely $K=\bbQ(\sqrt{-d})$ with $d=1,2,3,7,11$. Let $\frakp$ be a prime ideal of $\bbZ_K$. Our starting point is the basic fact that $H_1(\Gamma(\frakp),\bbZ) \simeq \Gamma(\frakp)^{ab}$ where $\Gamma(\frakp)^{ab}$ is the abelianization of $\Gamma(\frakp)$. To compute the abelianization of $\Gamma(\frakp)$, we will need a presentation. We will obtain this presentation from a presentation of $\SL_2(\bbZ_K)$ using the standard functions in the Finitely Presented Groups package of the computer algebra system {\sf Magma}.

Presentations for Bianchi groups go back to the late 19th century. We prefer to use those given in \cite[p.37]{grunewald_etal}. The presentations given there are for the projective Bianchi groups $\PSL_2(\bbZ_K)$. To obtain a presentation for $\SL_2(\bbZ_K)$, we simply introduce another generator $j= \left ( \begin{smallmatrix} -1 & 0 \\ 0 & -1 \end{smallmatrix} \right )$, modify the existing relations accordingly and add new relations to ensure that $j$ is central. We present here the result for the case $K=\bbQ(\sqrt{-1})$:
\begin{align*} 
\SL_2(\bbZ_K) = \langle a,b,u,j  \mid & (ab)^3=j, b^2=j, j^2=1, [a,u]=1, (bubu^{-1})^3=1, \\ 
& j=(bu^2bu^{-1})^2, j=(aubau^{-1}b)^2,  [a,j]=1, [u,j]=1 \rangle .
\end{align*} 
We have the matrix realizations $a=\left ( \begin{smallmatrix} 1 & 1 \\ 0 & 1 \end{smallmatrix} \right )$, $b=\left ( \begin{smallmatrix} 0 & -1 \\ 1 & 0 \end{smallmatrix} \right )$ and $u=\left ( \begin{smallmatrix} 1 & \sqrt{-1} \\ 0 & 1 \end{smallmatrix} \right )$.  

The principal congruence subgroup $\Gamma(\frakp)$ is the kernel of the surjective homomorphism 
\[ \SL_2(\bbZ_K) \longrightarrow  \SL_2(\bbZ_K/\frakp), \quad \left ( \begin{smallmatrix} a & b \\ c & d \end{smallmatrix} \right ) \mapsto \left ( \begin{smallmatrix} \bar{a} & \bar{b} \\ \bar{c} & \bar{d} \end{smallmatrix} \right )  
\]
where $x \to \bar{x}$ is the reduction map $\bbZ_K \to \bbZ_K/\frakp$. We implement this homomorphism in {\sf Magma} and ask {\sf Magma} to compute its kernel. Given the presentation of $\SL_2(\bbZ_K)$, {\sf Magma} then can compute a presentation for $\Gamma(\frakp)$ using Reidemeister--Schreier type algorithms. Finally we ask {\sf Magma} to compute the abelianization. For the readers' convenience, we make our code public on our homepage.\footnote{https://sites.google.com/site/mhaluksengun/research}

\subsection{Results}
As mentioned above, we compute with prime ideals. As $H_1(\Gamma(\frakp), \bbZ) \simeq H_1(\Gamma(\overline{\frakp}),\bbZ)$, for prime ideals with $\frakp \not= \overline{\frakp}$ (here), we computed with only one of them.  We list the norm of the prime ideal $\frakp$, the rank of $H_1(\Gamma(\frakp), \bbZ)$ and the size of the torsion subgroup of $H_1(\Gamma(\frakp), \bbZ)$. The size is given in its prime factorisation. 

{\small
\[
\begin{tabular}{|r|r|c|}
\hline
Norm($\frakp$) & rank & size of torsion \\ \hline
\multicolumn{3}{c}{$K=\bbQ(\sqrt{-1})$} \\ \hline
2 &   0 &  $2^5$\\
5 &   6 &   $1$\\
9 &  20 &    $1$\\
13 &  42 &   $1$\\
17 &  72 &    $1$\\
29 & 238 & $3^1$\\
37 & 342 &    $1$\\
41 & 420 & $2^{62}$\\
49 & 825 & $7^6$\\
53 & 702 & $3^{104}$\\
61 & 930 & $2^{124} 29^{62}$\\
73 & 1332 & $3^{37} 5^{74} 19^{74}$\\ \hline
\multicolumn{3}{c}{$K=\bbQ(\sqrt{-2})$} \\ \hline
2&  3&$2^2$ \\
3&  4&   $1$ \\
11& 60&   $1$ \\
17&144& $2^9$ \\
19&180&   $1$ \\
25&403&$5^7$ \\
41&881&$2^{85} 5^1 7^{40} 127^{40}$ \\
43&924&$2^{88} 3^{42} 67^{42} 127^{44}$ \\
49&1724&$7^{133}$ \\
59&1740&$2^{290} 3^{58} 11^{116} 31^{58} 59^{236} 5743^{60}$ \\
67&2244&$2^{200} 3^{135} 239^{66} 271^{66} 647^{66} 727^{68} 38011^{66} 47917^{68}$ \\ 
73 & 2738 & $2^{369} 3^{296} 19^{73} 73^{11} 1511^{74} 2089^{74} 22051^{72} 150959^{72}$ \\ \hline 
\multicolumn{3}{c}{$K=\bbQ(\sqrt{-3})$} \\ \hline
3&  0& $3^3$ \\ 
4&  5& $2^1$ \\ 
7&  8&  $1$ \\
13& 28&  $1$ \\
19& 60&  $1$ \\
25&117&  $1$ \\
31&160&  $1$ \\
37&228& $3^{19}$ \\ 
43&308&$2^{44}$ \\ 
61&620&$3^{62}$ \\ 
67&748&$2^{68} 5^{67}$ \\ 
79&1040&$2^{80} 41^{78}$ \\ \hline
\end{tabular}
\]
}

{\scriptsize
\[
\begin{tabular}{|r|r|c|}
\hline
Norm($\frakp$) & rank & size of torsion \\ \hline
\multicolumn{3}{c}{$K=\bbQ(\sqrt{-7})$} \\ \hline
2 &  3 & $2^1$ \\ 
7& 24&  $1$ \\ 
9 & 40 & $3^1$ \\
11& 60&   $1$ \\ 
23&264& $2^{22}$ \\ 
25 & 376 & $5^7$ \\
29&420& $5^{15}$ \\ 
37&684& $3^{19}  19^{38}$ \\ 
43&924& $5^{87} 41^{44} 67^{42}$ \\   
53&1404& $2^{320}  37^{52} 593^{52}  857^{54}$ \\  
67&2244& $2^{1056} 3^{67} 11^{67}  89^{68} 131^{68}  137^{66} 463^{68}$ \\  
71&2520& $2^{288} 5^{140} 29^{72} 59^{70}  89^{70}  311^{70} 937^{70} 19319^{72}$ \\ \hline
\multicolumn{3}{c}{$K=\bbQ(\sqrt{-11})$} \\ \hline
3&  4&  $1$ \\ 
4 &  15 & $2^2$ \\ 
5& 12&  $1$ \\ 
11& 81&  $1$ \\ 
23&264&$ 2^{112}$ \\ 
31&480&$2^{96} 5^{30} 29^{32} 31^{30}$ \\ 
37&722&$17^{38} 19^{36} 37^{5} 683^{36} $ \\ 
47&1151&$3^{138} 5^{47} 17^{46} 23^{1} 37^{46} 97^{46} 191^{46} 1609^{48} $ \\ 
53&1404&$2^{320} 5^{160} 11^{27} 19^{104} 53^{52} 431^{52} 683^{52} 859^{54} $ \\ 
59&1740&$2^{348} 3^{408} 5^{58} 7^{116} 11^{58} 17^{120} 19^{58} 31^{58} 59^{58} 199^{58} 233^{60} 5279^{60} 20341^{58} $ \\ 
67&2244&$2^{136} 3^{132} 17^{133} 31^{68} 67^{66} 197^{68} 331^{68} 613^{66} 2309^{68} 5807^{68} 67829^{66} 256189^{66} $ \\ \hline

\end{tabular}
\]
}
\bibliography{CongruenceRFRS}

\end{document}